\begin{document}
\title[Inhomogeneous Hartree equation with a potential]{Scattering for the non-radial inhomogeneous Hartree equation with a potential}

 \author[C. M. GUZM\'AN ]
	{CARLOS M. GUZM\'AN } 
	
	\address{CARLOS M. GUZM\'AN \hfill\break
		Department of Mathematics, Fluminense Federal University, BRAZIL}
	\email{carlos.guz.j@gmail.com}
     
     \author[S. SILVA ]
	{SUERLAN SILVA}  
	
	\address{SUERLAN SILVA  \hfill\break
	Department of Mathematics, Fluminense Federal University, BRAZIL}
	\email{suerlansilva@id.uff.br}

      \author[G. PEÇANHA ]
	{GABRIEL PEÇANHA}  
	
	\address{GABRIEL PEÇANHA  \hfill\break
	Department of Mathematics, Fluminense Federal University, BRAZIL}
	\email{gabrielcavalcanti@id.uff.br}

\begin{abstract} In this work, we consider the focusing generalized inhomogeneous Hartree equation with potential
\[
i u_t + \Delta u - V(x)u + \left(I_{\gamma} * |x|^{-b}|u|^{p}\right)|x|^{-b}|u|^{p-2}u = 0,
\]
where \(0<\gamma<3\) and \(0<b<\tfrac{1+\gamma}{2}\). 
We prove scattering in the intercritical case for nonradial initial data, under a mass–potential condition that generalizes the usual mass–energy threshold. 
The main new points compared to previous works are the inhomogeneous weight \(|x|^{-b}\) and the presence of a potential \(V\), which lead us to study the perturbed operator \(-\Delta + V\). 

Our proof follows the general strategy of Murphy~\cite{MurphyNonradial}, but we need to adapt several steps to deal with the weight and the potential. We use Tao’s scattering criterion together with localized Morawetz estimates in this setting. As a preliminary step, we establish global well-posedness for small data, which, in the presence of \(V\), requires careful analysis using appropriate admissible Strichartz pairs.

\ 

\noindent Mathematics Subject Classification. 35A01, 35QA55, 35P25.
\end{abstract}

\keywords{Inhomogeneous generalized Hartree equation; Kato class potential; Virial Morawetz estimate; Scattering theory}

	\maketitle  
	\numberwithin{equation}{section}
	\newtheorem{theorem}{Theorem}[section]
	\newtheorem{proposition}[theorem]{Proposition}
	\newtheorem{lemma}[theorem]{Lemma}
	\newtheorem{corollary}[theorem]{Corollary}
	\newtheorem{remark}[theorem]{Remark}
	\newtheorem{definition}[theorem]{Definition}

\section{Introduction}

\ In this work, we consider the Cauchy problem for the focusing inhomogeneous generalized Hartree equation with potential:
\begin{equation}\label{IGHP}
\begin{cases}
iu_t + \Delta u - V(x)u + \left(I_{\gamma} * |x|^{-b} |u|^{p} \right) |x|^{-b} |u|^{p-2} u = 0, \\
u(0) = u_0 \in H^1(\mathbb{R}^3),
\end{cases}
\end{equation}
where \( u : \mathbb{R} \times \mathbb{R}^3 \to \mathbb{C} \), \( u_0 : \mathbb{R}^3 \to \mathbb{C} \), and \( * \) denotes convolution in \( \mathbb{R}^3 \). The nonlocal nonlinearity involves the inhomogeneous weight \( |x|^{-b} \) with \( b > 0 \), and the Riesz potential \( I_\gamma : \mathbb{R}^3 \to \mathbb{R} \) is defined by
\begin{equation}\label{riezpotential}
I_\gamma(x) := \frac{\Gamma\left(\frac{3 - \gamma}{2}\right)}{\Gamma\left(\frac{\gamma}{2}\right) \pi^{3/2} 2^\gamma |x|^{3 - \gamma}} =: \frac{\mathcal{K}}{|x|^{3 - \gamma}}, \quad 0 < \gamma < 3.
\end{equation}

\indent We assume that the external potential \( V : \mathbb{R}^3 \to \mathbb{R} \) satisfies the structural conditions
\begin{equation}\label{V-potential1}
V \in \mathcal{K}_0 \cap L^{3/2}(\mathbb{R}^3),
\end{equation}
and
\begin{equation}\label{V-potential2}
\left\| V_{-} \right\|_{\mathcal{K}} < 4\pi,
\end{equation}
where \( V_{-} := \min\{V, 0\} \) denotes the negative part of \( V \), and the Kato class \( \mathcal{K}_0 \) is the closure of bounded, compactly supported functions with respect to the Kato norm:
\begin{equation}\label{katonorm}
\| V \|_{\mathcal{K}} := \sup_{x \in \mathbb{R}^3} \int_{\mathbb{R}^3} \frac{|V(y)|}{|x - y|} \, dy.
\end{equation}

\indent Under assumptions \eqref{V-potential1}–\eqref{V-potential2}, the operator \( \mathcal{H} := -\Delta + V \) is self-adjoint with no eigenvalues. Moreover, the associated Schrödinger propagator \( e^{-it\mathcal{H}} \) satisfies dispersive and Strichartz estimates, as shown in~\cite{HONG}. The energy functional associated with \( \mathcal{H} \) is defined by
\[
\| \Lambda u \|^2 := \int_{\mathbb{R}^3} |\nabla u(x)|^2 \, dx + \int_{\mathbb{R}^3} V(x) |u(x)|^2 \, dx,
\]
which defines a norm equivalent to the standard \( H^1(\mathbb{R}^3) \)-norm under the conditions above.

\ The equation~\eqref{IGHP} enjoys the following scaling symmetry:
\begin{equation}
u_\lambda(t, x) = \lambda^{\frac{2 - 2b + \gamma}{2(p - 1)}} u\left(\lambda^2 t, \lambda x\right), \quad \lambda > 0,
\end{equation}
which identifies the unique invariant homogeneous \( L^2 \)-based Sobolev space as \( \dot{H}^{s_c}(\mathbb{R}^3) \), where
\[
s_c = \frac{3}{2} - \frac{2 - 2b + \gamma}{2(p - 1)}.
\]
When \( s_c = 0 \), the critical space is \( L^2(\mathbb{R}^3) \), which is naturally associated with the conservation of mass, given by
\begin{equation}\label{mass}
M(u) := \int_{\mathbb{R}^3} |u(t,x)|^2 \, dx = M(u_0).
\end{equation}

On the other hand, when \( s_c = 1 \), the critical space becomes \( \dot{H}^1(\mathbb{R}^3) \), corresponding to the conservation of energy, defined as the sum of kinetic and potential components:
\begin{equation}\label{energy}
E(u) := \frac{1}{2} \int_{\mathbb{R}^3} |\nabla u|^2 \, dx + \frac{1}{2} \int_{\mathbb{R}^3} V(x) |u|^2 \, dx - \frac{1}{p} P(u(t)) = E(u_0),
\end{equation}
where the nonlinear potential energy is given by
\begin{equation}
P(u(t)) := \int_{\mathbb{R}^3} \left( I_\gamma * |x|^{-b} |u|^p \right) |x|^{-b} |u|^p \, dx.
\end{equation}

Throughout this paper, it will also be useful to consider the energy functional in the absence of the external potential \( V \), defined by
\begin{equation}\label{energywithoutpotential}
E_0(u) := \frac{1}{2} \int_{\mathbb{R}^3} |\nabla u|^2 \, dx - \frac{1}{p} P(u(t)).
\end{equation}

\ In this paper, we establish scattering for nonradial data in the intercritical range up to 
\( p < \frac{5}{2} - 2b + \gamma \), slightly below the \( \dot{H}^1 \)-critical threshold 
\( p < 3 - 2b + \gamma \), due to technical constraints in the nonlinear estimates required for the small-data theory, which plays a crucial role in our proof.

\begin{definition}
We say that a solution \( u \) to~\eqref{IGHP} \emph{scatters forward in time in} \( H^1(\mathbb{R}^3) \) if there exists \( u_+ \in H^1(\mathbb{R}^3) \) such that
\begin{equation}
\lim_{t \to \infty} \left\| u(t) - e^{-it\mathcal{H}} u_+ \right\|_{H^1(\mathbb{R}^3)} = 0,
\end{equation}
where \( e^{-it\mathcal{H}} \) is the linear Schrödinger propagator associated with \( \mathcal{H} = -\Delta + V(x) \).
\end{definition}

\ Before stating our result, we recall some key developments in the literature. The well-posedness of the inhomogeneous Hartree equation without potential (i.e., \( V = 0 \), \( p = 2 \), and \( b \neq 0 \) in equation~\eqref{IGHP}) was first established by Cazenave~\cite{CAZENAVEBOOK}. For the generalized Hartree equation with \( p > 2 \), global well-posedness and scattering in \( H^1(\mathbb{R}^3) \) were proved by Arora and Roudenko~\cite{10.1307/mmj/20205855}, using the concentration–compactness and rigidity method of Kenig–Merle~\cite{KENIG}. Later, Arora provided an alternative proof~\cite{Anudeep} via the Dodson–Murphy approach~\cite{Dod-Mur}, which relies on Tao’s scattering criterion~\cite{Tao} and radial Morawetz-type estimates. Feng and Yuan~\cite{Binhua} studied the subcritical Hartree equation with \( V = 0 \) and \( b = 0 \). Saanouni and Chengbin~\cite{Tarek} established scattering for radial data in the case \( V = 0 \), \( b > 0 \), and Chengbin~\cite{xu2021scatteringnonradialfocusinginhomogeneous} extended the result to the nonradial setting. More recently, Guzmán-Loli-Yapu~\cite{guzman_loli_yapu_2024} established scattering for the homogeneous Hartree equation ($b=0$) with \( V \neq 0 \), assuming initial radial data.

\ Motivated by the works mentioned above, we extend the analysis to the inhomogeneous generalized Hartree equation with potential in the nonradial setting. Our approach draws on techniques inspired by Murphy~\cite{MurphyNonradial}. 
As a first step, we establish global existence for small initial data in \( H^1(\mathbb{R}^3) \), which serves as a starting point for developing the scattering theory.


\begin{proposition}\label{GWPH2} Let $0<b<\frac{1+\gamma}{2}$ and $\frac{5-2b+\gamma}{3}<p<\frac{5-4b+2\gamma}{2}$. Suppose that $\|u_0\|_{H^{1}(\mathbb{R}^{3})}\leq E$,  then there exists $\delta_{sd}=\delta_{sd}(E)>0$ such that, if 
\begin{equation}\label{smalldatawellposedness}
\|e^{-it\mathcal{H}}u_0\|_{S(\dot{H}^{s_c},[0,+\infty))}<\delta_{sd}  
\end{equation}
then, the solution $u$ to \eqref{IGHP} with initial condition $u_0\in H^{1}(\mathbb{R}^{3})$ is globally defined on $[0,+\infty)$. Moreover,
\begin{equation}\label{NGWP3}
\|u\|_{S(\dot{H}^{s_c},[0,+\infty))}\leq  2\delta_{sd}\quad \textnormal{and}\quad\|u\|_{S\left(L^2,[0,+\infty)\right)}+\|\Lambda u\|_{S\left(L^2,[0,+\infty)\right)}\leq 2cE.
\end{equation}
\end{proposition}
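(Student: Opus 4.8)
The plan is to run a standard contraction-mapping / continuity argument on the Duhamel formulation
\[
u(t) = e^{-it\mathcal{H}} u_0 + i \int_0^t e^{-i(t-s)\mathcal{H}} \mathcal{N}(u)(s)\, ds, \qquad \mathcal{N}(u) = \left(I_\gamma * |x|^{-b}|u|^p\right)|x|^{-b}|u|^{p-2}u,
\]
in a complete metric space built from the relevant Strichartz norms. First I would fix the functional framework: work on a ball
\[
B = \left\{ u : \|u\|_{S(\dot H^{s_c},[0,\infty))} \le 2\delta_{sd}, \ \|u\|_{S(L^2)} + \|\Lambda u\|_{S(L^2)} \le 2cE \right\}
\]
with a suitably weak metric (measuring only the $S(\dot H^{s_c})$-type distance, so that the space is complete), and define the map $\Phi(u)$ to be the right-hand side of the Duhamel formula. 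The key inputs are: (i) the Strichartz estimates for $e^{-it\mathcal{H}}$ from \cite{HONG}, valid for $\mathcal{H}$-admissible pairs under \eqref{V-potential1}--\eqref{V-potential2}; (ii) the norm equivalence $\|\Lambda \cdot\| \simeq \|\cdot\|_{H^1}$, which lets me pass between $\mathcal{H}$-Sobolev and ordinary Sobolev spaces and, crucially, lets me commute $\Lambda$ through the Duhamel term (since $e^{-it\mathcal{H}}$ commutes with $\mathcal{H}$, hence with any function of $\mathcal{H}$); and (iii) nonlinear (dual Strichartz / Hardy--Littlewood--Sobolev) estimates controlling $\|\mathcal{N}(u)\|$ and $\|\nabla \mathcal{N}(u)\|$ in an appropriate dual admissible norm.

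The heart of the matter is the nonlinear estimate. I would estimate $\mathcal{N}(u)$ in a dual Strichartz space by first applying Hardy--Littlewood--Sobolev to handle the Riesz convolution $I_\gamma *(\cdot)$, then splitting the weights $|x|^{-b}$ near the origin and near infinity (writing $|x|^{-b} = |x|^{-b}\chi_{|x|\le 1} + |x|^{-b}\chi_{|x|>1}$ and using that the first lies in $L^{q_1}$ for $q_1 < 3/b$ and the second in $L^{q_2}$ for $q_2 > 3/b$), and finally using Hölder and Sobolev embedding $\dot H^{s_c} \hookrightarrow L^{r}$ to absorb the powers of $u$. The restriction $0<b<\frac{1+\gamma}{2}$ guarantees that the relevant Lebesgue exponents for the weights are admissible, and the intercritical window $\frac{5-2b+\gamma}{3}<p<\frac{5-4b+2\gamma}{2}$ (equivalently $0<s_c<1$ together with the stated upper bound) is exactly what makes the Sobolev exponents lie in the allowed range and produces a genuinely superlinear estimate of the schematic form
\[
\|\mathcal{N}(u)\|_{\text{dual}} \lesssim \|u\|_{S(\dot H^{s_c})}^{2p-2}\quad\text{or}\quad \|u\|_{S(\dot H^{s_c})}^{2p-3}\,\|u\|_{S(L^2)},
\]
with the analogous bound for the $\nabla$- (or $\Lambda$-) version obtained by distributing one derivative and using the Leibniz/fractional chain rule together with the fact that $\nabla$ falling on the weight $|x|^{-b}$ only worsens the singularity to $|x|^{-b-1}$, still integrable for the chosen exponents since $b+1 < \frac{3+\gamma}{2} \le 3$. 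The upper bound $p<\frac{5-4b+2\gamma}{2}$ is precisely the "technical constraint" flagged in the introduction: it is needed so that the weighted Lebesgue exponents pair up admissibly in the derivative estimate, which is why one cannot reach the full $\dot H^1$-critical power.

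With these estimates in hand, the contraction is routine: choosing $\delta_{sd}$ small depending on $E$ (and $c$ the Strichartz constant), the Strichartz estimate gives $\|\Phi(u)\|_{S(\dot H^{s_c})} \le \|e^{-it\mathcal{H}}u_0\|_{S(\dot H^{s_c})} + C\,(2\delta_{sd})^{2p-2} \le \delta_{sd} + C(2\delta_{sd})^{2p-2} \le 2\delta_{sd}$ since $2p-2>1$, and similarly $\|\Phi(u)\|_{S(L^2)} + \|\Lambda \Phi(u)\|_{S(L^2)} \le cE + C(2\delta_{sd})^{2p-3}(2cE) \le 2cE$; the difference estimate $\|\Phi(u)-\Phi(v)\|_{S(\dot H^{s_c})} \le \tfrac12\|u-v\|_{S(\dot H^{s_c})}$ follows in the same way after shrinking $\delta_{sd}$ once more. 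Thus $\Phi$ is a contraction on $B$, its fixed point $u$ is the desired global solution on $[0,\infty)$, and the bounds \eqref{NGWP3} are exactly the defining bounds of $B$. I would close by noting that uniqueness in the full space, persistence of $H^1$ regularity, and continuous dependence follow from standard additional arguments (or have been established in the companion local theory); the only genuinely new work relative to the $V=0$ case is checking that every Strichartz pair invoked is $\mathcal{H}$-admissible in the sense of \cite{HONG} and that commuting $\Lambda$ past $e^{-it\mathcal{H}}$ is legitimate, both of which are guaranteed by \eqref{V-potential1}--\eqref{V-potential2}.
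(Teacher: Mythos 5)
Your proposal follows essentially the same route as the paper: a contraction on a ball defined by the $S(\dot H^{s_c})$-norm together with $\|u\|_{S(L^2)}+\|\Lambda u\|_{S(L^2)}$, closed via the Strichartz estimates for $e^{-it\mathcal{H}}$, the equivalence of Sobolev norms, and nonlinear estimates obtained from Hardy--Littlewood--Sobolev with the weight $|x|^{-b}$ split near and away from the origin, exactly as in Lemma~\ref{L:NL} and the paper's proof of Proposition~\ref{GWPH2}. The only blemishes are cosmetic: your schematic nonlinear bounds carry total homogeneity $2p-2$ instead of the correct $2p-1$ for $N_\gamma$, and the paper pins the restriction $p<\tfrac{5}{2}-2b+\gamma$ specifically on the companion exponent $3^-<3$ (needed for $\|\Lambda\cdot\|_{L^r}\sim\|\nabla\cdot\|_{L^r}$) forcing $\bar r<6$ in the derivative estimate---neither point changes the structure of your argument.
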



\begin{remark}\label{important remark1} It is worth mentioning that, in our global well-posedness result, we impose the restriction \( p < \frac{5}{2} - 2b + \gamma \), slightly below the full intercritical range \( p < 3 - 2b + \gamma \). This condition stems from the use of the \( \| \cdot \|_{S(\dot{H}^{s_c})} \) norm, which controls the scattering size. In our argument, it naturally arises when selecting admissible Strichartz pairs that satisfy the required parameter constraints. We emphasize this aspect, as it is crucial for establishing the scattering result, particularly in the presence of a potential, where the analysis demands greater precision. 
\end{remark} 

\  We are now ready to state the main result of this paper, which establishes scattering for general initial data to the Cauchy problem~\eqref{IGHP}.

\begin{theorem}\label{mainresulttheorem}
Let  \( 0 < b < \frac{1 + \gamma}{2} \), and $\frac{5 - 2b + \gamma}{3} < p < \frac{5 - 4b + 2\gamma}{2}$. Let \( V : \mathbb{R}^3 \rightarrow \mathbb{R} \) satisfy \eqref{V-potential1}, with 
\begin{equation}
 V \geq 0,\quad x \cdot \nabla V \leq 0,\quad \textnormal{and}\quad  x \cdot \nabla V \in L^{r},
\end{equation}
for some \( r \in [\frac{3}{2}, \infty) \). Let \( u_0 \in H^1(\mathbb{R}^3) \), possibly non-radial, and let \( u(t) \) be the corresponding solution to \eqref{IGHP}, satisfying
\begin{equation}\label{newcondition}
\sup_{t \in [0, T^*]} P(u(t)) M(u(t))^{\sigma_c} < P(Q) M(Q)^{\sigma_c}, 
\quad \text{where} \quad \sigma_c = \frac{1 - s_c}{s_c}.
\end{equation}
Then \( u \) is a global solution to \eqref{IGHP} and scatters in \( H^1(\mathbb{R}^3) \).
\end{theorem}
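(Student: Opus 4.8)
The plan is to follow the concentration-compactness–free route of Dodson–Murphy as adapted by Murphy~\cite{MurphyNonradial}: combine (i) a coercivity/variational analysis converting the mass–potential condition~\eqref{newcondition} into uniform-in-time control of $\|\Lambda u(t)\|$ and a sign on a virial-type quantity, (ii) Tao's scattering criterion, and (iii) an interaction-Morawetz estimate. First I would run the variational step. Since $V\ge 0$, we have $E_0(u)\le E(u)$ and $\|\nabla u\|_2\le\|\Lambda u\|$; applying the sharp Gagliardo–Nirenberg-type inequality for the term $P(u)$ (whose sharp constant is attained by the ground state $Q$ of the potential-free equation, with the standard Pohozaev identities $\|\nabla Q\|_2^2$, $P(Q)$, $M(Q)$ related through $s_c$), the hypothesis $\sup_t P(u(t))M(u(t))^{\sigma_c}<P(Q)M(Q)^{\sigma_c}$ feeds into a continuity/bootstrap argument on $[0,T^*)$ to yield a constant $\delta>0$ with $P(u(t))M(u(t))^{\sigma_c}\le(1-\delta)P(Q)M(Q)^{\sigma_c}$ for all $t$, hence $\sup_t\|\Lambda u(t)\|\lesssim_{u_0}1$. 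This gives $H^1$-boundedness and, a fortiori (combined with the local theory and the fact that $\mathcal H$ has no eigenvalues), global existence; it also produces the strict coercivity needed to make the virial quantity $\partial_t\langle u, (x\cdot\nabla)u\rangle$ controlled below by a positive multiple of $P(u(t))$ (the good sign from $x\cdot\nabla V\le 0$ is exactly what makes the potential contribution to the virial harmless here).

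Next I would localize: set $a_R(x)$ a smooth radial cutoff, $a_R=|x|^2$ for $|x|\le R$, and form the truncated Morawetz action $M_R(t)=2\,\mathrm{Im}\int \bar u\,\nabla a_R\cdot\nabla u\,dx$. Differentiating in time and using the coercivity from the first step, together with the constraints $0<b<\tfrac{1+\gamma}{2}$ (which guarantees $|x|^{-b}$ and the Riesz kernel combine into an integrable, admissible weight and that the error terms from $\nabla a_R$ away from the origin are summable), $x\cdot\nabla V\le 0$, and $x\cdot\nabla V\in L^r$, one obtains
\begin{equation}
\int_0^T\!\!\int_{|x|\le R}|x|^{-b}\bigl(I_\gamma*|x|^{-b}|u|^p\bigr)|u|^p\,dx\,dt \;\lesssim\; R\,\sup_{t}\|\Lambda u(t)\|^2 + (\text{error terms}),
\end{equation}
where the error terms are controlled using $\sup_t\|\Lambda u(t)\|\lesssim 1$, the $L^r$ bound on $x\cdot\nabla V$, and the decay of $\nabla^j a_R$. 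Averaging over a well-chosen $R=R(T)\to\infty$ (balancing the linear-in-$R$ growth against an $o(R)$ error) produces a spacetime bound forcing a sequence of times $t_n\to\infty$ along which the mass of $u(t_n)$ concentrated near the origin vanishes; a standard argument upgrades this to the smallness of $\|e^{-it\mathcal H}u(t_n)\|_{S(\dot H^{s_c},[t_n,\infty))}$ needed to invoke Proposition~\ref{GWPH2} (used here with $t_n$ as initial time) — that is precisely Tao's criterion.

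Finally, once Tao's criterion supplies finiteness of the relevant scattering norm on $[0,\infty)$, I would conclude $H^1$-scattering in the standard way: split Duhamel, bound the nonlinear term in dual Strichartz spaces using the nonlinear estimates underlying Proposition~\ref{GWPH2} (the restriction $p<\tfrac{5-4b+2\gamma}{2}$ enters here, exactly as in Remark~\ref{important remark1}), and show the Duhamel tail is Cauchy in $H^1$, defining $u_+$. The main obstacle I anticipate is the Morawetz/virial step in the presence of both the weight $|x|^{-b}$ and the potential $V$: one must check that the commutator of $(x\cdot\nabla)$ with the nonlocal inhomogeneous nonlinearity has a favorable sign (or an error that is lower order after localization), and that the cutoff errors — now interacting with the Riesz convolution, which is nonlocal and so does not respect the support of $a_R$ — are genuinely $o(R)$; handling the nonlocal tails of $I_\gamma*|x|^{-b}|u|^p$ across the cutoff region, using the decay of $I_\gamma$ and the $H^1$-bound, is the technically delicate part. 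The hypotheses $V\ge0$ and $x\cdot\nabla V\le0$ are what keep the potential from destroying the needed positivity, and $x\cdot\nabla V\in L^r$ (with $r\ge 3/2$, i.e. at or below the scaling-critical Lebesgue exponent in dimension three) is what makes the potential error term in the Morawetz estimate controllable by Sobolev embedding.
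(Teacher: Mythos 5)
Your proposal is correct and follows essentially the same route as the paper: the mass--potential condition plus energy conservation and the sharp Gagliardo--Nirenberg/Pohozaev analysis give $H^1$-boundedness, global existence and the coercivity lemma; the localized virial--Morawetz estimate with the truncated weight $a_R$ (the sign conditions $V\ge 0$, $x\cdot\nabla V\le 0$ and $x\cdot\nabla V\in L^r$ handling the potential terms) yields vanishing of the localized potential energy, hence of the local mass, along a sequence of times; and Tao's scattering criterion together with the small-data theory of Proposition~\ref{GWPH2} concludes scattering. The only cosmetic difference is your initial mention of an ``interaction-Morawetz'' estimate, whereas both you and the paper in fact use the standard localized virial--Morawetz action $M_a(t)=2\,\mathrm{Im}\int\bar u\,\nabla a\cdot\nabla u\,dx$.
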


\begin{remark}
The hypotheses in Theorem~\ref{mainresulttheorem} involve quantities that are \emph{not conserved} along the flow of equation~\eqref{IGHP}, particularly the nonlinear potential energy \( P(u(t)) \). This introduces additional complexity in the analysis, especially when \( V(x) \neq 0 \), where classical conservation laws must be supplemented with robust \emph{a priori} estimates. In global theory, we make use of both the \( H^{s_c} \)-admissible pair \( (\bar{a}, \bar{r}) \) and the \( L^2 \)-admissible pair \( (4^+, 3^-) \). Note that \( 3^- < 3 \), which is necessary for the equivalence of Sobolev norms. To satisfy the condition \( \bar{r} < 6 \), required for the admissibility of \( (\bar{a}, \bar{r}) \), we are led to impose the restriction $p < \frac{5}{2} - 2b + \gamma.$ (See Section $2$).
\end{remark}

\begin{remark}
After completing this manuscript, we became aware of an independent work by Saanouni and Salah~\cite{B_Saanouni_2025}, which also addresses the scattering problem. Although they claim to treat scattering for the full intercritical range, several foundational aspects, such as the global small data theory, the non-linear estimates required for global control, and the role of the scattering norm $\| \cdot \|_{S(\dot{H}^{s_c})}$, are not fully detailed. In particular, based on the previous remark, it is not yet clear to us how the result in~\cite{B_Saanouni_2025} extends up to the upper limit of the intercritical range, $3-2b+\gamma$. Our work aims to provide a more complete treatment of these core components, thereby ensuring a rigorous justification of the scattering result.
\end{remark}


\ Our proof of Theorem~\ref{mainresulttheorem} adopts a direct and quantitative strategy based on a localized virial/Morawetz estimate, thereby avoiding the contradiction framework of Kenig–Merle. We apply a version of Tao's scattering criterion: if a global solution \( u \in L^\infty_t H^1_x \) satisfies
\[
\lim_{t \to \infty} \int_{|x| \leq R} |u(t,x)|^2 \, dx = 0,
\]
for some (and hence any) \( R > 0 \), then \( u \) scatters in \( H^1(\mathbb{R}^3) \). Given the potential–mass threshold assumption in the theorem, we establish that this local decay condition is satisfied. The key step is to derive a localized Morawetz estimate tailored to the presence of the external potential \( V(x) \), taking advantage of its positivity and repulsive character, along with the structure of the non-local non-linearity. These ingredients yield uniform-in-time control of mass and energy inside compact regions, which ensures the decay required by the scattering criterion and concludes the proof.

\noindent Before outlining the main ideas behind the proof of Theorem~\ref{mainresulttheorem}, we record a useful consequence of the threshold assumption. The result below provides an a priori control along the flow in terms of the conserved mass and energy, and reflects the standard conditions considered in earlier works.

\begin{corollary}\label{maincorollary}
Let \( u \) be a solution to \eqref{IGHP} with maximal lifespan. If
\begin{equation}\label{1}
    M(u_0)^{\sigma_c}E(u_0) < M(Q)^{\sigma_c}E_0(Q),
\end{equation}
and
\begin{equation}\label{2}
    \|u_0\|_{L^{2}}^{\sigma_c} \|\Lambda u_0\|_{L^{2}} < \|Q\|_{L^{2}}^{\sigma_c} \|\nabla Q\|_{L^{2}},
\end{equation}
then
\begin{equation}
\sup_{t \in [0, \infty)} \|u(t)\|^{\sigma_{c}}_{L^{2}} \|\Lambda u(t)\|_{L^{2}} < \|Q\|^{\sigma_{c}}_{L^{2}} \|\nabla Q\|_{L^{2}}.
\end{equation}
Consequently, \( u \) scatters in \( H^1(\mathbb{R}^3) \).
\end{corollary}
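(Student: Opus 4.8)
The plan is to show that the two threshold conditions \eqref{1}--\eqref{2}, together with the positivity $V \geq 0$, force the scale-invariant quantity $\|u(t)\|_{L^2}^{\sigma_c}\|\Lambda u(t)\|_{L^2}$ to remain strictly below $\|Q\|_{L^2}^{\sigma_c}\|\nabla Q\|_{L^2}$ on the whole maximal interval; this simultaneously rules out finite-time blow-up (so $u$ is global) and verifies hypothesis \eqref{newcondition} of Theorem~\ref{mainresulttheorem}, from which scattering follows. The first ingredient I would invoke is the sharp Gagliardo--Nirenberg-type inequality for the inhomogeneous Hartree functional,
\[
P(f) \leq C_{GN}\,\|\nabla f\|_{L^2}^{a}\,\|f\|_{L^2}^{c}, \qquad a := 3p-3+2b-\gamma, \quad c := 3+\gamma-2b-p,
\]
whose optimal constant is attained by the ground state $Q$, the positive solution of $-\Delta Q + Q = (I_\gamma * |x|^{-b}|Q|^p)|x|^{-b}|Q|^{p-2}Q$. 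Here $a+c = 2p$, and on the intercritical range $a-2 = 3p-5+2b-\gamma > 0$ with $\sigma_c = \tfrac{c}{a-2}$, so multiplying by $M(f)^{\sigma_c}$ yields the scale-free form $M(f)^{\sigma_c}P(f) \leq C_{GN}\bigl(M(f)^{\sigma_c}\|\nabla f\|_{L^2}^{2}\bigr)^{a/2}$. The Pohozaev and Nehari identities for $Q$ give $\|\nabla Q\|_{L^2}^2 = \tfrac{a}{c}\|Q\|_{L^2}^2$ and $P(Q) = \tfrac{2p}{a}\|\nabla Q\|_{L^2}^2$; together with sharpness at $Q$ these determine $C_{GN}$ and give, writing $\omega := M(Q)^{\sigma_c}\|\nabla Q\|_{L^2}^2$ and $f(y) := \tfrac12 y - \tfrac{C_{GN}}{p}y^{a/2}$, the key identity $M(Q)^{\sigma_c}E_0(Q) = f(\omega)$ (here $\omega$ lies beyond the unique maximizer of $f$ on $(0,\infty)$, so $f$ is strictly decreasing at $\omega$; one computes $f'(\omega) = -\tfrac12$). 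These are variational facts about the elliptic profile, which I would cite from the (inhomogeneous) generalized-Hartree literature rather than reprove.

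With these in hand, the core is a continuity/bootstrap argument. Because $V \geq 0$ we have $\|\nabla u(t)\|_{L^2} \leq \|\Lambda u(t)\|_{L^2}$, so with $y(t) := M(u(t))^{\sigma_c}\|\Lambda u(t)\|_{L^2}^{2}$, the energy identity, the scale-free Gagliardo--Nirenberg inequality, and conservation of mass and energy give
\[
M(u_0)^{\sigma_c}E(u_0) = M(u(t))^{\sigma_c}E(u(t)) \geq \tfrac12 y(t) - \tfrac{C_{GN}}{p}y(t)^{a/2} = f(y(t))
\]
for every $t$ in the maximal interval. By \eqref{2}, $y(0) < \omega$, and by \eqref{1}, $M(u_0)^{\sigma_c}E(u_0) < M(Q)^{\sigma_c}E_0(Q) = f(\omega)$. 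Since $t \mapsto y(t)$ is continuous (the flow is $H^1$-continuous, $\|\Lambda\cdot\|_{L^2}$ is equivalent to the $H^1$ norm, and $M$ is conserved), $y$ can never equal $\omega$: otherwise, at a first such time $t_0$, $f(\omega) = f(y(t_0)) \leq M(u_0)^{\sigma_c}E(u_0) < f(\omega)$, a contradiction. As $f'(\omega) < 0$, there is $\omega' < \omega$ with $f > M(u_0)^{\sigma_c}E(u_0)$ on $(\omega',\omega]$; since $f(y(0)) \leq M(u_0)^{\sigma_c}E(u_0)$ forces $y(0) \leq \omega'$, the same argument yields $\sup_{t} y(t) \leq \omega' < \omega$, that is
\[
\sup_{t \in [0,\infty)} \|u(t)\|_{L^2}^{\sigma_c}\|\Lambda u(t)\|_{L^2} < \|Q\|_{L^2}^{\sigma_c}\|\nabla Q\|_{L^2}.
\]
In particular $\|u(t)\|_{H^1}$ is bounded uniformly in time, so by the blow-up alternative $u$ is global.

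To finish, I would feed $y(t) \leq \omega' < \omega$ back into the scale-free inequality: since $y \mapsto C_{GN}y^{a/2}$ is strictly increasing, $\sup_{t} M(u(t))^{\sigma_c}P(u(t)) \leq C_{GN}(\omega')^{a/2} < C_{GN}\omega^{a/2} = M(Q)^{\sigma_c}P(Q)$, which is precisely hypothesis \eqref{newcondition} of Theorem~\ref{mainresulttheorem}; scattering in $H^1(\mathbb{R}^3)$ then follows from that theorem. The only genuinely non-routine input is the sharp Gagliardo--Nirenberg inequality and the variational identities for $Q$ in the inhomogeneous Hartree setting; the point requiring care relative to the potential-free case is the systematic use of $V \geq 0$ to pass between $\|\nabla\cdot\|_{L^2}$ and $\|\Lambda\cdot\|_{L^2}$ — this is exactly why the threshold is stated with $\Lambda$ on the left while the right-hand side involves the potential-free ground-state quantities — and I expect this bookkeeping, rather than any deep estimate, to be the main thing to get right.
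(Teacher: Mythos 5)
Your proposal follows essentially the same route as the paper's proof: the sharp Gagliardo--Nirenberg inequality with the Pohozaev/Nehari identities for $Q$, conservation of mass and energy together with $V\ge 0$ (to pass from $\|\nabla u\|_{L^2}$ to $\|\Lambda u\|_{L^2}$), a continuity-in-time argument keeping $M(u(t))^{\sigma_c}\|\Lambda u(t)\|_{L^2}^2$ strictly below the ground-state value, and finally the conversion of that uniform bound into \eqref{newcondition} so that Theorem~\ref{mainresulttheorem} yields scattering. Your exponents $a=3p-3+2b-\gamma$, $c=3+\gamma-2b-p$ are the correct ones (they coincide with $2(p-1)s_c+2$ and $2(p-1)(1-s_c)$), and the final step $\sup_t M(u)^{\sigma_c}P(u)\le C_{GN}(\omega')^{a/2}<C_{GN}\,\omega^{a/2}=M(Q)^{\sigma_c}P(Q)$ is exactly the paper's conclusion.

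One point does need repair. You take the nonlinear term in the energy with coefficient $\tfrac1p$, i.e.\ $f(y)=\tfrac12 y-\tfrac{C_{GN}}{p}y^{a/2}$, following the display \eqref{energy} literally; but the conserved Hamiltonian of \eqref{IGHP} carries $\tfrac{1}{2p}P(u)$ (this is what the paper actually uses: $E_0(Q)=\tfrac12\|\nabla Q\|_{L^2}^2-\tfrac{1}{2p}P(Q)=\tfrac{B-2}{2B}\|\nabla Q\|_{L^2}^2$). As written, your argument invokes conservation for a functional that is not conserved: if one instead inserts the true conserved energy while keeping your $f$, the contradiction at the first crossing time evaporates, since then $f(\omega)=\tfrac{a-4}{2a}\,\omega<\tfrac{a-2}{2a}\,\omega=M(Q)^{\sigma_c}E_0(Q)$, so hypothesis \eqref{1} no longer forces $f(y(t_0))<f(\omega)$. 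The fix is pure bookkeeping: set $f(y)=\tfrac12 y-\tfrac{C_{GN}}{2p}y^{a/2}$; then $f(\omega)=M(Q)^{\sigma_c}E_0(Q)$ and $f'(\omega)=0$, so $\omega$ is exactly the maximizer of $f$ (your parenthetical claims that $\omega$ lies beyond the maximizer and $f'(\omega)=-\tfrac12$ are artifacts of the $\tfrac1p$ normalization), and your continuity argument, the strict gap $\omega'<\omega$, globality, and the passage to \eqref{newcondition} go through verbatim, matching the paper's proof.
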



\subsection{Outline}

\ The paper is organized as follows. In Section~2, we present preliminary results and establish the well-posedness theory. In Section~3, we prove the scattering criterion following the method of Murphy~\cite{MurphyNonradial} (see also~\cite{CardosoCampos}). Section~4 is devoted to the variational analysis of the ground state and the proof of Corollary~\eqref{maincorollary}. Finally, in Section~5, we prove Theorem~\eqref{mainresulttheorem} using localized virial–Morawetz estimates.

\section{\bf Notation and Preliminaries}\label{sec2}

\ We begin by introducing the notation and functional framework used throughout the paper. We write \( a \lesssim b \) to indicate \( a \leq c b \) for some constant \( c > 0 \), with dependencies specified via subscripts when relevant.

Let \( I \subset \mathbb{R} \) and \( p, r \in [1, \infty] \). We use the standard Lebesgue spaces \( L^p(\mathbb{R}^3) \) and the mixed space-time spaces \( L^p_t L^r_x(I \times \mathbb{R}^3) \), defined by
\[
\|f\|_{L^p(I; L^r)} := \left( \int_I \left( \int_{\mathbb{R}^3} |f(t,x)|^r dx \right)^{p/r} dt \right)^{1/p} < \infty.
\]
When \( I = \mathbb{R} \), we omit the interval and simply write \( \| \cdot \|_{L^p_t L^r_x} \).

We also work with the homogeneous and inhomogeneous Sobolev spaces \( \dot{H}^{s,r}(\mathbb{R}^3) \) and \( H^{s,r}(\mathbb{R}^3) \). In the case \( r = 2 \), we abbreviate \( \dot{H}^s := \dot{H}^{s,2} \) and \( H^s := H^{s,2} \). Subscripts are used when the variable of integration needs to be emphasized.

\subsection{Strichartz Estimates}

To develop the well-posedness theory for \eqref{IGHP}, we recall the Strichartz estimates in the form suited to our setting. A pair \( (q, r) \) is:

\begin{itemize}
    \item \( L^2 \)-admissible if \( \frac{2}{q} = \frac{3}{2} - \frac{3}{r} \), with \( 2 \leq r \leq 6 \);
    \item \( \dot{H}^s \)-admissible if \( \frac{2}{q} = \frac{3}{2} - \frac{3}{r} - s \);
    \item \( \dot{H}^{-s} \)-admissible if \( \frac{2}{q} = \frac{3}{2} - \frac{3}{r} + s \).
\end{itemize}

Given \( s \in (0,1) \), we define the admissible sets:
\[
A_s := \left\{ (q, r)\ \text{is } \dot{H}^s\text{-admissible};\ \left(\tfrac{2N}{N - 2s} \right)^+ \leq r \leq 6^- \right\}.
\]
Note that \( A_0 \) denotes the set of \( L^2 \)-admissible pairs. Here, \( a^\pm = a \pm \varepsilon \), for a small \( \varepsilon > 0 \).

We define the Strichartz norm:
\[
\|f\|_{S(\dot{H}^s, I)} := \sup_{(q, r) \in A_s} \|f\|_{L^q_t L^r_x(I \times \mathbb{R}^3)},
\]
and the dual norm:
\[
\|f\|_{S'(\dot{H}^{-s}, I)} := \inf_{(q, r) \in A_{-s}} \|f\|_{L^{\hat{q}}_t L^{\hat{r}}_x(I \times \mathbb{R}^3)},
\]
where \( (\hat{q}, \hat{r}) \) are the Hölder conjugates of \( (q, r) \).

\ In the sequel, we discuss some inequalities that will be important in the paper.
\begin{lemma}[Dispersive estimate, \cite{HONG}] Let $V:\mathbb{R}^{3}\rightarrow\mathbb{R}$ satisfy (\ref{V-potential1}) and (\ref{V-potential2}). Then it holds that
\begin{equation}\label{DE1}
\|e^{-it\mathcal{H}}\|_{L^{1}\rightarrow L^{\infty}}\lesssim |t|^{-\frac{3}{2}}
\end{equation}

\end{lemma}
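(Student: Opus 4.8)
The estimate \eqref{DE1} is precisely the dispersive bound proved in \cite{HONG}, obtained there by adapting the scaling-critical perturbation theory for Schr\"odinger operators with Kato-class potentials; my plan would be to recall the structure of that argument. The hypotheses \eqref{V-potential1}--\eqref{V-potential2} enter in two complementary ways. First, by the Birman--Schwinger principle --- using that the free resolvent in $\mathbb R^{3}$ has kernel $(4\pi|x-y|)^{-1}$ --- the condition $\|V_{-}\|_{\mathcal K}<4\pi$ forces the associated Birman--Schwinger operator built from $V_{-}$ to have norm at most $(4\pi)^{-1}\|V_{-}\|_{\mathcal K}<1$, so that $\mathcal H=-\Delta+V$ is non-negative and self-adjoint with no eigenvalues and no zero-energy resonance; in particular $e^{-it\mathcal H}$ acts on all of $L^{2}$ and no projection onto an absolutely continuous subspace is required. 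Second, $V\in\mathcal K_{0}\cap L^{3/2}$ places $V$ in a class that is scaling critical for $e^{it\Delta}$ and under which the free decay rate $|t|^{-3/2}$ is stable.

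The core of the proof is the Duhamel (Born) expansion
\[
e^{-it\mathcal H}=e^{it\Delta}+\sum_{n\ge1}(-i)^{n}\int_{0\le s_{n}\le\cdots\le s_{1}\le t}e^{i(t-s_{1})\Delta}\,V\,e^{i(s_{1}-s_{2})\Delta}\,V\cdots V\,e^{is_{n}\Delta}\,ds,
\]
together with the free dispersive bound $\|e^{i\tau\Delta}\|_{L^{1}\to L^{\infty}}\lesssim|\tau|^{-3/2}$. The difficulty --- and the point I expect to be the main obstacle --- is that a naive term-by-term estimate diverges: the Kato norm $\|V\|_{\mathcal K}=\sup_{x}\int|x-y|^{-1}|V(y)|\,dy$ is invariant under the parabolic scaling of $e^{it\Delta}$, so no loss is affordable, and the iterated time integrals $\int|t-s_{1}|^{-3/2}\cdots|s_{n-1}-s_{n}|^{-3/2}\,ds$ fail to converge. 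The remedy, following \cite{HONG} (and, in the background, Beceanu--Goldberg), is to resum the series in an algebra of operator-valued measures in the time variable: one writes $e^{it\Delta}$ through its explicit kernel $c\,t^{-3/2}e^{i|x-y|^{2}/4t}$, checks that the composition of two such free kernels through a factor $V$ is again dominated by an $L^{1}\to L^{\infty}$ kernel carrying the $|t|^{-3/2}$ decay, with the passage through $V$ costing a factor controlled by $\|V\|_{\mathcal K}$, uses the $L^{3/2}$ membership to handle the low-frequency/large-time regime, and exploits the smallness of $\|V\|_{\mathcal K}$ to make the perturbation a contraction on this algebra. Summing the resulting geometric series yields $\|e^{-it\mathcal H}\|_{L^{1}\to L^{\infty}}\lesssim|t|^{-3/2}$ with an implicit constant depending only on $\|V\|_{\mathcal K}$, $\|V\|_{L^{3/2}}$ and $4\pi-\|V_{-}\|_{\mathcal K}$.

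Finally, since $V\in\mathcal K_{0}$ is the Kato-norm limit of bounded compactly supported potentials, I would first carry out the series manipulations for such potentials, where they are unambiguous, and then pass to the limit, the uniformity of constants being supplied by the contraction estimate above. Beyond the scaling-critical resummation just described --- which genuinely requires exploiting the oscillation of the free Schr\"odinger kernel against the Kato norm rather than crude size bounds --- the remaining ingredients are standard, and for the complete argument I would refer to \cite{HONG}.
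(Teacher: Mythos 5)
The paper itself offers no proof of this lemma: it is imported verbatim from Hong \cite{HONG} (who in turn relies on the Beceanu--Goldberg/D'Ancona--Pierfelice dispersive theory for Kato-class potentials), so deferring to that reference, as you do, matches what the paper does, and your outline of the Born-expansion framework is broadly faithful to that literature.

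There is, however, a genuine flaw in the mechanism you describe for closing the argument. The hypotheses \eqref{V-potential1}--\eqref{V-potential2} impose smallness only on the \emph{negative part}, \(\|V_{-}\|_{\mathcal K}<4\pi\); the full Kato norm \(\|V\|_{\mathcal K}\) is merely finite (since \(V\in\mathcal K_{0}\)) and can be arbitrarily large. Hence your final step --- ``exploits the smallness of \(\|V\|_{\mathcal K}\) to make the perturbation a contraction on this algebra'' followed by summing a geometric series --- is not available: under the stated assumptions the resummed Born series cannot be controlled by a norm bound \(<1\) on the perturbation. A contraction/geometric-series argument of the Rodnianski--Schlag type does work, but only under the strictly stronger hypothesis that \(\|V\|_{\mathcal K}\) itself is small. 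In the scaling-critical class actually assumed here, the argument used in \cite{HONG} (following Beceanu--Goldberg) inverts the relevant operator in the Wiener-type algebra of operator-valued measures via an abstract Wiener inversion theorem: invertibility is deduced not from smallness but from the pointwise invertibility of the Fourier transform, i.e.\ from the absence of negative eigenvalues and of a zero-energy eigenvalue or resonance. That is precisely where \(\|V_{-}\|_{\mathcal K}<4\pi\) enters, through the form bound \(\int V_{-}|u|^{2}\le (4\pi)^{-1}\|V_{-}\|_{\mathcal K}\|\nabla u\|_{L^{2}}^{2}\) and a Birman--Schwinger-type exclusion of zero-energy states --- not as a smallness parameter for the perturbation series. (Relatedly, your bound of the Birman--Schwinger operator by \((4\pi)^{-1}\|V_{-}\|_{\mathcal K}\) is naturally an estimate on \(L^{\infty}\), or a quadratic-form bound, rather than the \(L^{2}\) operator-norm bound you invoke.) With the closing step corrected in this way, the rest of your sketch, together with the approximation of \(V\in\mathcal K_{0}\) by bounded compactly supported potentials, is consistent with the cited proof.
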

Thanks to this dispersive estimate and the abstract theory of Keel and Tao in \cite{:keel} (see also Foschi \cite{foschi2005inhomogeneous}), we have the following Strichartz estimates:
\begin{lemma}[Strichartz estimate, \cite{HONG}] Let $V:\mathbb{R}^{3}\rightarrow\mathbb{R}$ satisfy (\ref{V-potential1}) and (\ref{V-potential2}). Then it holds that  
\begin{equation}\label{CSE1}
\|e^{-it\mathcal{H}}f\|_{S(\dot H^{s})}\lesssim\|f\|_{\dot H^{s}};
\end{equation}
and

\begin{equation}\label{CSE3}
\left\|\int_{0}^{t}e^{-i(t-s)\mathcal{H}}f(\cdot,s)ds\right\|_{S(L^{2},I)}\lesssim\|f\|_{S^{'}(L^{2},I)}.
\end{equation}
\begin{lemma}[Kato inhomogeneous Strichartz estimate, \cite{HONG}] Let $V:\mathbb{R}^{3}\rightarrow\mathbb{R}$ satisfy (\ref{V-potential1}) and (\ref{V-potential2}). Then,
\begin{equation}\label{CSE2}
\left\|\int_{0}^{t}e^{-i(t-s)\mathcal{H}}f(\cdot,s)ds\right\|_{S(\dot H^{s},I)}\lesssim\|f\|_{S^{'}(\dot H^{-s},I)};
\end{equation}
\end{lemma}
\subsection{Equivalence of Sobolev norms} 
In this subsection, we define the homogeneous and inhomogeneous Sobolev spaces associated to $\mathcal{H}$ as the closure of $C^{\infty}_{0}(\mathbb{R}^{3})$ under the norms. Let us define $\Lambda=\mathcal{H}^{\frac{1}{2}}$ and $\langle\Lambda\rangle=(1+\mathcal{H})^{\frac{1}{2}}$ and the norms by
\begin{eqnarray}
\|f\|_{\dot W_{V}^{\alpha,r}}:=\|\Lambda^{\alpha}f\|_{L^{r}},\ \|f\|_{W_{V}^{\alpha,r}}:=\|\langle\Lambda \rangle^{\alpha}f\|_{L^{r}},
\end{eqnarray}
\begin{lemma}(Sobolev inequalities \cite{HONG})Let $V:\mathbb{R}^{3}\rightarrow\mathbb{R}$ satisfy (\ref{V-potential1}) and (\ref{V-potential2}). Then it holds that
\begin{equation}
\|f\|_{L^{q}}\lesssim \|f\|_{\dot W_{V}^{\alpha,r}}, \|f\|_{W_{V}^{\alpha,r}}\sim\|f\|_{W^{\alpha,r}}
\end{equation}
where $1<p,q<\infty, 1<p<\frac{3}{\alpha}$, $0\leq\alpha\leq2$ and $\frac{1}{q}=\frac{1}{p}-\frac{\alpha}{3}$
\end{lemma}
\begin{lemma}(Equivalence of Sobolev spaces \cite{HONG}). Let $V:\mathbb{R}^{3}\rightarrow\mathbb{R}$ satisfy (\ref{V-potential1}) and (\ref{V-potential2}). Then it holds that
\begin{equation}
\|f\|_{\dot W_{V}^{\alpha,r}}\sim \|f\|_{W^{\alpha,r}}\ \text{and} \ \|f\|_{W_{V}^{\alpha,r}}\sim\|f\|_{W^{\alpha,r}}
\end{equation}
where $1<r<\frac{3}{\alpha}$ and $0\leq\alpha\leq2$.
\end{lemma}

\end{lemma}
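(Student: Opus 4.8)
The plan is to reduce both claimed equivalences to a single operator-boundedness statement and then deploy heat-kernel machinery. Since $\|f\|_{\dot W^{\alpha,r}_V}=\|\mathcal H^{\alpha/2}f\|_{L^r}$ and $\|f\|_{\dot W^{\alpha,r}}=\|(-\Delta)^{\alpha/2}f\|_{L^r}$, the homogeneous equivalence $\|f\|_{\dot W^{\alpha,r}_V}\sim\|f\|_{\dot W^{\alpha,r}}$ is exactly the assertion that $T_\alpha:=\mathcal H^{\alpha/2}(-\Delta)^{-\alpha/2}$ and its inverse $(-\Delta)^{\alpha/2}\mathcal H^{-\alpha/2}$ extend to bounded operators on $L^r(\mathbb R^3)$; the inhomogeneous equivalence $\|f\|_{W^{\alpha,r}_V}\sim\|f\|_{W^{\alpha,r}}$ is the same statement with $1+\mathcal H$ and $1-\Delta$ in place of $\mathcal H$ and $-\Delta$ (the extra $+1$ only helps, keeping resolvents away from the spectral bottom). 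The case $\alpha=0$ is trivial, and the case $r=2$, $0\le\alpha\le1$ is classical: for $\alpha=1$, $r=2$ one uses $\|\Lambda f\|_{L^2}^2=\|\nabla f\|_{L^2}^2+\int_{\mathbb R^3}V|f|^2\,dx$ together with $V_+\in L^{3/2}$ (so $\int V_+|f|^2\le\|V_+\|_{3/2}\|f\|_6^2\lesssim\|\nabla f\|_{L^2}^2$) and the sharp Kato inequality $\int|V_-|\,|f|^2\,dx\le(4\pi)^{-1}\|V_-\|_{\mathcal K}\|\nabla f\|_{L^2}^2$ with $\|V_-\|_{\mathcal K}<4\pi$, which in particular gives $\mathcal H\ge0$; $0<\alpha<1$ then follows by interpolation. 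So the real content is the $L^r$ estimate for $r\ne2$.

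The engine for that is a Gaussian upper bound on the heat kernel of $\mathcal H$. Under \eqref{V-potential1}–\eqref{V-potential2}, $\mathcal H=-\Delta+V$ is non-negative and self-adjoint (KLMN), and the Feynman–Kac representation combined with Khasminskii's lemma — applicable precisely because $V\in\mathcal K_0$ and $\|V_-\|_{\mathcal K}$ is small — yields $|e^{-t\mathcal H}(x,y)|\lesssim t^{-3/2}e^{-c|x-y|^2/t}$ (when $V\ge0$ this is just domination by the free heat kernel). Gaussian bounds place $\mathcal H$ in the Blunck–Kunstmann/Hebisch/Duong–McIntosh framework: $\mathcal H$ admits a bounded Hörmander-type functional calculus on $L^r$ for every $1<r<\infty$; in particular the imaginary powers $\mathcal H^{i\sigma}$ are $L^r$-bounded with polynomial growth in $\sigma$, and likewise for $-\Delta$.

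With this in hand the comparison operator is handled perturbatively. For $0<\alpha<2$ one uses the Balakrishnan-type formula $\mathcal H^{\alpha/2}=c_\alpha\int_0^\infty t^{-1-\alpha/2}\bigl(e^{-t\mathcal H}-I\bigr)\,dt$ and its free analogue, and compares heat kernels through the Duhamel identity $e^{-t\mathcal H}=e^{t\Delta}-\int_0^t e^{(t-s)\Delta}\,V\,e^{-s\mathcal H}\,ds$; the Gaussian bounds for both semigroups, the Sobolev-subcriticality of $V$ ($V\in L^{3/2}$, so that the $V$-term is controlled by Hölder once $(-\Delta)^{-\alpha/2}\colon L^r\to L^{3r/(3-\alpha r)}$, which requires exactly $r<3/\alpha$), and $r>1$ for the singular-integral estimates, show that $T_\alpha-I$ and $T_\alpha^{-1}-I$ are bounded on $L^r$. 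For $\alpha=1$ this is the boundedness of the perturbed and free Riesz transforms $\nabla\mathcal H^{-1/2}$ and $\mathcal H^{1/2}(-\Delta)^{-1/2}$, which follows from the Gaussian bounds by Calderón–Zygmund theory for $1<r\le2$ and by duality (the adjoint is a form estimate) for $2<r<3$. For $\alpha\in(1,2)$ one applies Stein complex interpolation to the analytic family $z\mapsto\mathcal H^{z}(-\Delta)^{-z}$, which is $L^r$-bounded on $\Re z=0$ by the imaginary-power bounds and on $\Re z=1$ whenever $r<3/2$ (there $\mathcal H^{1+i\sigma}(-\Delta)^{-1-i\sigma}=\mathcal H^{i\sigma}\bigl(I+V(-\Delta)^{-1}\bigr)(-\Delta)^{-i\sigma}$ with $V(-\Delta)^{-1}$ bounded on $L^r$ by Sobolev–Hölder); this is why, for $\alpha$ near $2$, $r$ is forced near the upper endpoint $3/\alpha$. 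The inhomogeneous equivalence is obtained verbatim with $1+\mathcal H$, $1-\Delta$ replacing $\mathcal H$, $-\Delta$.

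The step I expect to be the main obstacle is exactly this passage from $r=2$ to $r\ne2$: the quadratic-form identity is then useless and one genuinely needs the heat-kernel Gaussian bounds and the ensuing functional-calculus/Calderón–Zygmund theory. This is where the hypotheses are used essentially — $V\in\mathcal K_0\cap L^{3/2}$ both for the kernel bounds and for absorbing $V$ via Sobolev–Hölder, and $\|V_-\|_{\mathcal K}<4\pi$ to keep $\mathcal H\ge0$ so that $\mathcal H^{\alpha/2}$ and $\mathcal H^{i\sigma}$ are well-defined and the calculus applies. The range $1<r<3/\alpha$ is sharp for this method ($r>1$ for the singular integrals, $r<3/\alpha$ so the $V$-perturbation of the relevant Riesz/Bessel operator is Sobolev-subcritical in dimension $3$). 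Since our application only invokes a few pairs $(\alpha,r)$ — essentially $\alpha=1$ with $r$ slightly below $3$ for the Strichartz and Sobolev-embedding steps, and $\alpha=2$ with $r$ slightly below $3/2$ for the energy-norm equivalence — one could alternatively give a more hands-on perturbative argument in those cases; the overall scheme is the one above, and coincides with that of \cite{HONG}.
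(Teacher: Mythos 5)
Your treatment of the norm-equivalence lemmas is correct in outline and coincides with the route of \cite{HONG}, which is all the paper itself invokes (these statements are quoted, not proved, in the text): the form-boundedness of $V$ at $r=2$ via the Kato-norm smallness $\|V_-\|_{\mathcal K}<4\pi$ and $V_+\in L^{3/2}$, Gaussian heat-kernel upper bounds from $V\in\mathcal K_0$, $L^r$-boundedness of the imaginary powers, the identity $\mathcal H(-\Delta)^{-1}=I+V(-\Delta)^{-1}$ with $V(-\Delta)^{-1}$ controlled by H\"older--Sobolev for $r<3/2$, and Stein interpolation along $z\mapsto\mathcal H^{z}(-\Delta)^{-z}$ with varying Lebesgue exponent to reach the full range $1<r<3/\alpha$. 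The Sobolev inequality $\|f\|_{L^q}\lesssim\|f\|_{\dot W^{\alpha,r}_V}$ then follows from the equivalence together with the classical embedding, as you implicitly note.

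The genuine gap is that the statement you were asked to prove also contains the Strichartz estimates \eqref{CSE1}, \eqref{CSE3} and the Kato-type inhomogeneous estimate \eqref{CSE2}, and your proposal says nothing about them. These are not consequences of the Sobolev-norm equivalence; the paper obtains them from the dispersive bound \eqref{DE1}, $\|e^{-it\mathcal H}\|_{L^1\to L^\infty}\lesssim|t|^{-3/2}$, combined with the abstract Keel--Tao theorem (giving the $L^2$-admissible homogeneous and dual inhomogeneous estimates \eqref{CSE1} with $s=0$ and \eqref{CSE3}), and with the Foschi/Kato extension to non-admissible exponent pairs, which is what produces the estimate \eqref{CSE2} in the $S(\dot H^{s},I)$ / $S'(\dot H^{-s},I)$ scale where the pairs $(q,r)\in A_{s}$ and $(\tilde q,\tilde r)\in A_{-s}$ are not dual to each other. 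The homogeneous estimate \eqref{CSE1} for $s\neq0$ additionally uses the Sobolev-norm equivalence to pass between $\Lambda^{s}$ and $(-\Delta)^{s/2}$, so the two halves of the statement interact; but the dispersive-estimate-plus-$TT^*$ machinery is an independent ingredient that must be supplied, and as written your argument does not yield any of \eqref{CSE1}--\eqref{CSE2}.
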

\begin{lemma}Let $N\geq$, $0<\lambda<3$, $1<r,s<\infty$ and $f\in L^{r}, g\in L^{s}$. If $\displaystyle\frac{1}{r}+\frac{1}{s}+\frac{\lambda}{3}=2$, then
\begin{equation}
\int\int_{\mathbb{R^N}\times\mathbb{R^N}}\frac{f(x)g(y)}{|x-y|^\lambda}dxdy\leq C(N,s,\lambda)\|f\|_{L^{r}_{x}}\|g\|_{L^{s}_{x}}
\end{equation}  
\end{lemma}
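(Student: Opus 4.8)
The statement to be proved is the classical Hardy--Littlewood--Sobolev (HLS) inequality (in dimension $N$; the exponent $\tfrac{\lambda}{3}$ in the display is the case $N=3$, with $\tfrac{\lambda}{3}=\tfrac{\lambda}{N}$, and the argument is dimension-independent). If I did not simply invoke a standard reference (Lieb--Loss, or Stein's book on singular integrals), I would prove it via the weak Young / Lorentz-space convolution inequality. Writing $K(x):=|x|^{-\lambda}$ and reducing to $f,g\ge 0$ by replacing $f,g$ with $|f|,|g|$, the double integral is the pairing
\[
\iint_{\mathbb{R}^N\times\mathbb{R}^N}\frac{f(x)g(y)}{|x-y|^{\lambda}}\,dx\,dy=\langle K*f,\;g\rangle .
\]
So it is enough to prove $\|K*f\|_{L^{s'}}\lesssim\|f\|_{L^{r}}$ and then apply H\"older's inequality with the conjugate pair $(s',s)$.

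The first observation is that $K\in L^{N/\lambda,\infty}(\mathbb{R}^N)$, the weak-$L^{N/\lambda}$ space: since $|\{x:|x|^{-\lambda}>t\}|=c_N t^{-N/\lambda}$, one has $\|K\|_{L^{N/\lambda,\infty}}=c_N^{\lambda/N}<\infty$. The second ingredient is the weak Young inequality: if $1<r<\infty$ and $q$ is defined by $1+\tfrac1q=\tfrac1r+\tfrac{\lambda}{N}$ (so $1<r<q<\infty$, using $\lambda<N$), then $\|K*f\|_{L^{q}}\lesssim\|K\|_{L^{N/\lambda,\infty}}\|f\|_{L^{r}}$. I would prove this bound by Marcinkiewicz interpolation: split $K=K\mathbf{1}_{\{|x|<R\}}+K\mathbf{1}_{\{|x|\ge R\}}$, note that the first piece lies in $L^{a}$ for every $a<N/\lambda$ and the second in $L^{b}$ for every $b>N/\lambda$, apply the elementary Young convolution inequality to each piece, and optimize in $R$ to get a restricted weak-type estimate for $f\mapsto K*f$; interpolation then upgrades it to the strong bound. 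The hypotheses $1<r,s<\infty$ are exactly what keeps $r$ and $q=s'$ off the endpoints where this argument degenerates.

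To conclude, I would use the scaling constraint $\tfrac1r+\tfrac1s+\tfrac{\lambda}{N}=2$: taking $q=s'$ gives $1+\tfrac1q=1+\tfrac1{s'}=2-\tfrac1s=\tfrac1r+\tfrac{\lambda}{N}$, which is precisely the hypothesis of the weak Young inequality, and $1<s'<\infty$. Hence $\|K*f\|_{L^{s'}}\lesssim\|f\|_{L^{r}}$, and H\"older gives $\bigl|\langle K*f,g\rangle\bigr|\le\|K*f\|_{L^{s'}}\|g\|_{L^{s}}\lesssim\|f\|_{L^{r}}\|g\|_{L^{s}}$, with the constant depending only on $N,\lambda$ (and on $s$ through the interpolation constant, accounting for the $C(N,s,\lambda)$ in the statement). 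The main obstacle is the weak Young inequality itself, i.e.\ the convolution bound with a weak-type kernel, whose rigorous proof needs Marcinkiewicz interpolation; an alternative that avoids citing interpolation is a direct dyadic decomposition $K=\sum_{j\in\mathbb{Z}}K\mathbf{1}_{\{2^{j}\le|x|<2^{j+1}\}}$, estimating each annular convolution by Young's inequality and summing, together with a splitting of $f$ (and $g$) by their level sets to handle the extreme exponents. Everything else is routine bookkeeping with the scaling relation.
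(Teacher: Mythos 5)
Your proposal is correct; note, however, that the paper does not prove this lemma at all — it simply records the classical Hardy--Littlewood--Sobolev inequality as a known result (normally cited from Lieb--Loss or Stein), so there is no in-paper argument to compare against. Your route is the standard textbook proof: observe $|x|^{-\lambda}\in L^{N/\lambda,\infty}$, establish the weak Young inequality $\|K*f\|_{L^{q}}\lesssim \|f\|_{L^{r}}$ with $1+\tfrac1q=\tfrac1r+\tfrac{\lambda}{N}$ via kernel splitting and Marcinkiewicz interpolation (the splitting in $R$, optimized against the level $t$, gives the weak-type $(r,q)$ bounds at two nearby exponent pairs that interpolation needs), and conclude by H\"older with the conjugate pair $(s',s)$. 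The scaling bookkeeping is exactly right: the hypothesis $\tfrac1r+\tfrac1s+\tfrac{\lambda}{N}=2$ is precisely $1+\tfrac1{s'}=\tfrac1r+\tfrac{\lambda}{N}$, and the restriction $1<r,s<\infty$ (together with $0<\lambda<N$, forcing $r<q=s'<\infty$) keeps the argument away from the endpoints where weak Young fails, matching the hypotheses of the lemma. One cosmetic remark: the statement in the paper is written for general $N$ but with the exponent $\tfrac{\lambda}{3}$ (and a truncated ``$N\geq$''), i.e.\ it is really the case $N=3$ used throughout the paper; your dimension-independent formulation with $\tfrac{\lambda}{N}$ is the correct general version.
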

\begin{corollary} Let $0<\gamma<3$ and $1< s,q,r<\infty$ be such that $\displaystyle\frac{1}{q}+\frac{1}{r}+\frac{1}{s}=1+\frac{\gamma}{3}$. Assume that $f\in L^{s}(\mathbb{R}^{3})$ and $g\in L^{q}(\mathbb{R}^{3})$. Then,
$$\|(I_{\gamma}*f)g\|_{r'}\lesssim\|f\|_{s}\|g\|_{q}$$
\end{corollary}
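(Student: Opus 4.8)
The claim is just the Hardy--Littlewood--Sobolev inequality packaged together with H\"older's inequality, and the plan is to deduce it from the bilinear HLS estimate stated immediately above by a duality argument. Since $1<r<\infty$, I would first write
\[
\|(I_\gamma * f) g\|_{L^{r'}} = \sup\Bigl\{\, \Bigl|\int_{\mathbb{R}^3} (I_\gamma * f)(x)\, g(x)\, h(x)\, dx\Bigr| \ :\ h\in L^{r}(\mathbb{R}^3),\ \|h\|_{L^{r}}\le 1\,\Bigr\},
\]
and, using \eqref{riezpotential}, expand the pairing as
\[
\int_{\mathbb{R}^3} (I_\gamma * f)(x)\, g(x)\, h(x)\, dx = \mathcal{K}\int_{\mathbb{R}^3}\!\int_{\mathbb{R}^3} \frac{f(y)\,g(x)h(x)}{|x-y|^{3-\gamma}}\, dy\, dx .
\]

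Next I would apply the bilinear HLS lemma with $\lambda = 3-\gamma\in(0,3)$, treating $f$ as one factor (in $L^{s}$) and the product $gh$ as the other (in $L^{b}$), where $b$ is defined by $\tfrac1b := \tfrac1q + \tfrac1r$. The admissibility relation required by that lemma, $\tfrac1s + \tfrac1b + \tfrac{3-\gamma}{3} = 2$, is exactly the hypothesis $\tfrac1q+\tfrac1r+\tfrac1s = 1+\tfrac{\gamma}{3}$, so the lemma yields
\[
\Bigl|\int_{\mathbb{R}^3} (I_\gamma * f)\, g\, h\, dx\Bigr| \lesssim \|f\|_{L^{s}}\,\|gh\|_{L^{b}} .
\]
Finally, since $\tfrac1b = \tfrac1q+\tfrac1r$, H\"older's inequality gives $\|gh\|_{L^{b}}\le \|g\|_{L^{q}}\|h\|_{L^{r}}\le\|g\|_{L^{q}}$; taking the supremum over $h$ then closes the estimate.

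The only point needing a little care is the admissibility of the exponent $b$ in the lemma, i.e. $1<b<\infty$: the inequality $\tfrac1b>0$ (equivalently $b<\infty$) holds automatically because $\tfrac1s<1<1+\tfrac{\gamma}{3}$, while $\tfrac1b<1$ (equivalently $b>1$), which amounts to $s<\tfrac3\gamma$, is the natural restriction under which the statement holds and should be read as implicit in the hypotheses — it is precisely the range in which $I_\gamma\ast$ maps $L^s$ boundedly into some $L^m$, and the excluded range corresponds to the HLS endpoint. An entirely equivalent route avoiding duality is to use H\"older first, $\|(I_\gamma*f)g\|_{L^{r'}}\le \|I_\gamma * f\|_{L^{m}}\|g\|_{L^{q}}$ with $\tfrac1m = \tfrac1{r'}-\tfrac1q = \tfrac1s-\tfrac\gamma3$, and then invoke the mapping property $\|I_\gamma * f\|_{L^{m}}\lesssim\|f\|_{L^{s}}$ (the dual form of the lemma, valid for $\tfrac1m\in(0,1)$); the index bookkeeping is identical. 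I do not expect any genuine obstacle here beyond tracking the exponents and flagging the endpoint constraint.
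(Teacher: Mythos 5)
Your argument is correct and is exactly the standard deduction the paper has in mind: the corollary is stated there without proof as an immediate consequence of the bilinear Hardy--Littlewood--Sobolev lemma, and your duality-plus-H\"older route (or the equivalent H\"older-then-HLS-mapping route) with the exponent check $\tfrac1s+\tfrac1b+\tfrac{3-\gamma}{3}=2$ is precisely that derivation. Your side remark that the hypotheses implicitly require $\tfrac1q+\tfrac1r<1$ (equivalently $s<\tfrac{3}{\gamma}$) for the HLS/mapping exponents to be admissible is accurate and worth flagging, since the paper's statement leaves it tacit.
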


\subsection{Local and Global Well-posedness}

In this section we study the well-posendess theory, we start with the local theory. Due to the equivalence of norms \( \|\Lambda u\|_{L^r} \sim \|\nabla u\|_{L^r} \), we require \( 1 < r < 3 \).

\begin{proposition}[Local well-posedness]\label{prop:local}
Suppose \( p \) satisfies \( 2 \leq p < 3 - 2b + \gamma \), and assume the potential \( V \) meets conditions \eqref{V-potential1}--\eqref{V-potential2}. Let \( u_0 \in H^1(\mathbb{R}^3) \). Then there exists \( T =T(\|u_0\|_{H^1}, p,\gamma) > 0 \), such that the Cauchy problem \eqref{IGHP} admits a unique solution \( u \) on \([0, T]\) with
\[
u \in C([0,T], H^1(\mathbb{R}^3)) \cap L^q([0,T], W^{1,r}_V(\mathbb{R}^3)),
\]
where \( (q, r) \) is \( L^2 \)-admissible.
\end{proposition}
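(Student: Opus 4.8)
The plan is to prove Proposition~\ref{prop:local} by a standard contraction-mapping (fixed-point) argument on the Duhamel formulation
\[
\Phi(u)(t) := e^{-it\mathcal{H}} u_0 + i \int_0^t e^{-i(t-s)\mathcal{H}} \mathcal{N}(u)(s)\, ds, \qquad \mathcal{N}(u) := \left( I_\gamma * |x|^{-b} |u|^p \right)|x|^{-b} |u|^{p-2} u,
\]
set up on the complete metric space
\[
B := \left\{ u \in C([0,T]; H^1) \cap L^q([0,T]; W^{1,r}_V) : \|u\|_{C([0,T];H^1)} + \|\Lambda u\|_{L^q_t L^r_x([0,T])} \leq 2c\|u_0\|_{H^1} \right\},
\]
equipped with a weaker metric (e.g.\ the $L^q_t L^r_x$ distance without derivatives, to avoid derivative loss on the difference estimate), where $(q,r)$ is a conveniently chosen $L^2$-admissible pair with $2<r<3$ so that the equivalence of Sobolev norms $\|\Lambda u\|_{L^r}\sim\|\nabla u\|_{L^r}$ of Lemma (Equivalence of Sobolev spaces) applies. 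First I would record, via the Strichartz estimates \eqref{CSE1} and \eqref{CSE3}, that
\[
\|\Lambda \Phi(u)\|_{S(L^2,[0,T])} \lesssim \|u_0\|_{H^1} + \|\Lambda \mathcal{N}(u)\|_{S'(L^2,[0,T])},
\]
so everything reduces to controlling the nonlinear term in a dual Strichartz norm.

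The core of the argument is the nonlinear estimate: I would show that for a suitable $L^2$-admissible exponent pair $(\tilde q', \tilde r')$ (the Hölder dual of an admissible $(\tilde q,\tilde r)$),
\[
\|\Lambda \mathcal{N}(u)\|_{L^{\tilde q'}_t L^{\tilde r'}_x([0,T])} \lesssim T^{\theta} \, \|\Lambda u\|_{L^q_t L^r_x([0,T])}^{2p-1}
\]
for some power $\theta>0$, and a matching difference estimate
\[
\|\mathcal{N}(u) - \mathcal{N}(v)\|_{L^{\tilde q'}_t L^{\tilde r'}_x([0,T])} \lesssim T^{\theta} \left( \|u\|_{B}^{2p-2} + \|v\|_{B}^{2p-2} \right) \|u-v\|_{L^q_t L^r_x([0,T])}.
\]
To get these, the spatial estimate at fixed time is handled by the Hardy–Littlewood–Sobolev corollary (the displayed Corollary with $(I_\gamma * f) g$): one writes $\mathcal{N}(u) = (I_\gamma * |x|^{-b}|u|^p)\,|x|^{-b}|u|^{p-2}u$, applies the convolution/$L^r$ bound to move the Riesz potential, and then splits the weight $|x|^{-b}$ into its contributions near the origin and at infinity, using $|x|^{-b}\in L^{d_1}(|x|\le 1) + L^{d_2}(|x|\ge 1)$ for appropriate $d_1,d_2$ (this is where $0<b<\tfrac{1+\gamma}{2}$ enters, ensuring the weighted Hardy/Hölder exponents are in the admissible range). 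When the $\Lambda$-derivative is applied, I would use the fractional product and chain rules together with $\|\Lambda(\,\cdot\,)\|_{L^r}\sim\|\nabla(\,\cdot\,)\|_{L^r}$; one derivative distributes either onto $u$, or onto the weight $|x|^{-b}$ (controlled by a weighted Hardy inequality since $b<1$), or onto the nonlocal factor (handled again by HLS). The time integration is closed by Hölder in $t$ on $[0,T]$, producing the gain $T^\theta$; here $\theta>0$ exactly because $p < 3-2b+\gamma$ keeps us strictly subcritical relative to $\dot H^1$.

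Once the nonlinear estimates are in place, the argument concludes routinely: for $T = T(\|u_0\|_{H^1})$ small enough the gain $T^\theta$ makes $\Phi$ map $B$ into itself and act as a contraction in the weaker metric; completeness of $B$ in that metric (a standard weak-lower-semicontinuity / Fatou argument recovering membership in the stronger space) yields a unique fixed point $u\in B$. Persistence of regularity — that $u\in C([0,T];H^1)$ rather than merely $L^\infty_t H^1$ — follows from the Strichartz bounds and dominated convergence applied to the Duhamel integral, and uniqueness in the full class $C([0,T];H^1)\cap L^q_t W^{1,r}_V$ follows by a further application of the difference estimate on a possibly shorter interval together with a continuity/bootstrap argument. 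The main obstacle I anticipate is the nonlinear estimate itself: correctly choosing the Strichartz pair $(q,r)$ with $2<r<3$ (for the norm equivalence) while simultaneously fitting all the Hölder exponents coming from the two weights $|x|^{-b}$, the Riesz kernel $I_\gamma$, and the $2p-1$ powers of $u$ into admissible ranges, and verifying that the constraint $p<3-2b+\gamma$ is precisely what makes such a choice possible with a strictly positive time power $\theta$. Everything else is the standard Cazenave–Weissler–type machinery.
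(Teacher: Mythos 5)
Your proposal is correct and follows essentially the same route as the paper, whose proof simply observes that the standard contraction argument from the potential-free case carries over (citing the reference) once one works with an \(L^2\)-admissible pair with \(r<3\) so that the equivalence \(\|\Lambda u\|_{L^r}\sim\|\nabla u\|_{L^r}\) applies — exactly the fixed-point scheme, HLS estimates, weight splitting, and subcritical \(T^\theta\) gain you outline. One small caveat: \(b\) need not satisfy \(b<1\) (only \(b<\tfrac{1+\gamma}{2}\)), but your treatment of the differentiated weight still goes through by peeling off a single factor \(|x|^{-1}\) and applying Hardy's inequality \(\||x|^{-1}u\|_{L^2}\lesssim\|\nabla u\|_{L^2}\), as the paper does in its nonlinear estimates.
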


\begin{proof}
The proof is similar to the case without potential, as the admissible pair with \( r < 3 \) ensures Sobolev norm equivalence. See \cite{alharbi_saanouni_2019}.
\end{proof}

\ To prove global existence in \( H^1 \), we proceed more carefully. In particular, to retain the equivalence \( \|\Lambda u\|_{L^r} \sim \|\nabla u\|_{L^r} \), it is necessary to choose admissible pairs \( (q, r) \) appropriately. Before presenting the global result, we establish the nonlinear estimates that serve as key analytic tools for proving both global well-posedness for small data and the scattering criterion.

\begin{lemma}[Nonlinear estimates]\label{L:NL} Let  $0<b<\frac{1+\gamma}{2}$ and $\frac{5-2b+\gamma}{3}<p<\frac{5-4b+2\gamma}{2}$ . There exist a small parameter $\theta>0$ such that
\begin{itemize}
\item [(i)] $\left \| N_{\gamma}(u)\right\|_{S'(\dot{H}^{-s_c})} \lesssim \| u\|^{2\theta}_{L^\infty_tH^{1}_x}\|u\|_{L^a L^r}^{2p-1-2\theta}$,
\item [(ii)] $\left\|N_{\gamma}(u)\right\|_{S'(L^2)}\lesssim \| u\|^{2\theta}_{L^\infty_tH^{1}_x}\|u\|^{2p-2-2\theta}_{ L^{\overline{a}} L^{\overline{r}}} \| u\|_{L^{4^{+}}L^{3^-}},
$
\item [(iii)] $\left\|\nabla N_{\gamma}(u)\right\|_{S'(L^2)} \lesssim 
\| u\|^{2\theta}_{L^\infty H^{1}_{x}}\|u\|^{2p-2-2\theta}_{L^{\overline{a}}L^{\overline{r}}} \|\nabla u\|_{L^{4^{+}}L^{3^-}}.
$

where $N_{\gamma}=(I_{\gamma}*|x|^{-b}|u|^{p})|x|^{-b}|u|^{p-2}u$,  \;  $3^-=\frac{3}{1+\varepsilon}$ and $ 4^+=\frac{4}{1-2\varepsilon}$.
\end{itemize}
\end{lemma}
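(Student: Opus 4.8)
The plan is to estimate the nonlocal nonlinearity $N_\gamma(u) = (I_\gamma * |x|^{-b}|u|^p)|x|^{-b}|u|^{p-2}u$ by combining the Hardy--Littlewood--Sobolev inequality (in the form of the Corollary above) with fractional Leibniz/product rules and the Sobolev embeddings associated to $\mathcal{H}$, and then interpolating the resulting Lebesgue exponents so that only a small power $2\theta$ of the $H^1$-norm appears while the bulk sits in the scattering norm $L^a L^r$ (resp. $L^{\bar a}L^{\bar r}$). First I would fix, for each of the three estimates, a target dual Strichartz pair: for (i), an admissible pair in $A_{-s_c}$ whose spatial conjugate exponent governs the $S'(\dot H^{-s_c})$ norm; for (ii)--(iii), an $L^2$-admissible dual pair. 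Applying the Corollary with the weight absorbed via Hölder (splitting $\mathbb{R}^3 = \{|x|\le 1\}\cup\{|x|>1\}$ to handle $|x|^{-b}$ near the origin and at infinity, using $0<b<\frac{1+\gamma}{2}$ to guarantee local integrability of the relevant power of $|x|^{-b}$), one reduces $\|N_\gamma(u)\|$ to a product of spatial $L^\rho$-norms of $u$ raised to total power $2p-1$; one then distributes these over time via Hölder in $t$ on the interval, matching the time exponents of the chosen admissible pairs.

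The core of the argument is the \emph{exponent bookkeeping}: I would introduce the interpolation parameter $\theta$ and write each spatial factor of $u$ as an interpolant between $L^r$ (or $L^{\bar r}$) — the scattering-norm exponent — and an $H^1$-controlled exponent (e.g. $L^6$ via Sobolev, or $L^2$), so that the sum of the reciprocal exponents matches what HLS and the weights demand, and so that the corresponding time-exponent identity is exactly the admissibility relation $\frac{2}{q} = \frac{3}{2}-\frac{3}{r}\mp s$. The constraints $\frac{5-2b+\gamma}{3}<p<\frac{5-4b+2\gamma}{2}$ are precisely what make this system of linear equations in the exponents solvable with all exponents in the admissible ranges $(\frac{2N}{N-2s_c})^+ \le r \le 6^-$ and $2 \le \bar r < 6$; in particular the upper bound on $p$ is what keeps $\bar r$ (or $r$) strictly below $6$, and the lower bound keeps $s_c$ in $(0,1)$ and the exponents positive. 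For (iii), before distributing, I would apply the fractional product rule to $\nabla N_\gamma(u)$: the derivative falls either on one of the $|u|^p$ factors inside the convolution (handled by the same HLS scheme with one factor replaced by $\nabla u$, using $|\nabla(|x|^{-b}|u|^p)| \lesssim |x|^{-b-1}|u|^p + |x|^{-b}|u|^{p-1}|\nabla u|$ and a Hardy-type inequality to absorb the extra $|x|^{-1}$), or on the outer factor $|x|^{-b}|u|^{p-2}u$, again producing the stated structure with $\nabla u$ in the last slot; the equivalence $\|\Lambda^\alpha u\|_{L^\rho}\sim\|\nabla^\alpha u\|_{L^\rho}$ (valid for $1<\rho<3$, hence the role of $3^- = \frac{3}{1+\varepsilon}$) lets me pass freely between $\mathcal{H}$- and flat derivatives.

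The main obstacle I anticipate is twofold. First, the \emph{weight}: near $x=0$ the factor $|x|^{-b}$ inside the convolution and $|x|^{-b}$ outside must each be absorbed by Hölder without destroying the exponent balance, which forces the Lebesgue exponents for the $u$-factors to lie in a narrow window; checking that this window is nonempty and compatible with the admissibility ranges — simultaneously at the origin and at infinity, and for all three estimates at once — is the delicate point, and is exactly where the precise hypotheses on $b$ and $p$ get used. Second, in (iii) the derivative landing on $|x|^{-b}$ generates a genuinely more singular weight $|x|^{-b-1}$, and controlling it requires a weighted Hardy inequality $\||x|^{-1}f\|_{L^\rho}\lesssim\|\nabla f\|_{L^\rho}$ whose validity again constrains $\rho$; reconciling this constraint with the choice $L^{4^+}L^{3^-}$ for the last factor is the sharpest part of the bookkeeping. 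Once the exponent system is shown solvable, the remaining steps — Hölder in time, invoking the Corollary, the Sobolev embedding for $\mathcal{H}$, and collecting the small power $2\theta$ — are routine, and the value of $\theta>0$ is read off from the slack in the strict inequalities defining the admissible $p$-range.
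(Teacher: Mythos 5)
Your proposal follows essentially the same route as the paper's proof: Hardy--Littlewood--Sobolev plus a Hölder splitting of $|x|^{-b}$ over $\{|x|\le 1\}$ and $\{|x|>1\}$, an interpolation parameter $\theta$ sending a small $L^{6}$/$L^{2}$ (Sobolev-embedded $H^1$) share of each factor so the bulk lands in the scattering norm, Hölder in time against the admissible pairs, and for (iii) the ordinary product rule with Hardy's inequality to absorb the $|x|^{-b-1}$ term together with the Sobolev-equivalence constraint that forces the exponent $3^-<3$ and hence $p<\tfrac{5}{2}-2b+\gamma$ via $\bar r<6$. The only difference is that you state the exponent bookkeeping as solvable rather than exhibiting the explicit choices ($r_1=6$ inside, $r_1=2$ outside, $a=\tfrac{2(p-\theta)}{1-s_c}$, $\bar a=\tfrac{8(p-1-\theta)}{1+2\varepsilon}$, etc.), which is exactly what the paper's proof carries out.
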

\begin{proof} Here, we use the \( L^2 \)-admissible pair \( (4^{+}, 3^{-}) \), justified by the equivalence of Sobolev spaces and the condition \( r < 3 \).

\begin{itemize}
\item [(i)] By definition of the norm $S'(\dot{H}^{-s_c})$, we have $\| N_{\gamma}(u)\|_{S'(\dot{H}^{-s_c})} \lesssim \|N_{\gamma}(u)\|_{L^{\tilde{a}'}L^{r'}}$. 

Let $A \in \{B_{1}(0), B_{1}^{c}(0)\}$. Applying the Hardy–Littlewood–Sobolev inequality, we obtain
\begin{align*}
\| N_{\gamma}(u)\|_{L^{r'}(A)} 
& \lesssim  \||x|^{-b}|u|^p\|_{L^{\beta}(A)} \cdot \||x|^{-b}|u|^{p-2}u\|_{L^{\sigma}(A)} \\
& \lesssim \||x|^{-b}\|_{L^{\alpha}(A)} \|u\|_{L^r(A)}^{p-\theta} \|u\|_{L^{r_1}(A)}^{\theta} \cdot \||x|^{-b}\|_{L^{\alpha}(A)} \|u\|_{L^r(A)}^{p-2-\theta} \|u\|_{L^{r_1}(A)}^{\theta} \|u\|_{L^r(A)} \\
& \lesssim \|u\|_{L^r(A)}^{2p - 2\theta - 1} \|u\|_{L^{r_1}(A)}^{2\theta},
\end{align*}
where
\[
1 + \frac{\gamma}{3} = \frac{1}{r} + \frac{1}{\beta} + \frac{1}{\sigma}, \quad \frac{1}{\beta} = \frac{1}{\alpha} + \frac{p - \theta}{r} + \frac{\theta}{r_1}, \quad \frac{1}{\sigma} = \frac{1}{\alpha} + \frac{p - 1 - \theta}{r} + \frac{\theta}{r_1}.
\]

We must ensure the integrability of $|x|^{-b}$. Specifically, we need
\[
\frac{3}{\alpha} - b > 0 \quad \text{for } A = B_1(0) \quad \textnormal{and}\quad \frac{3}{\alpha} - b < 0 \quad \text{for } A = B_1^c(0).
\]
To this end, for the interior region \( A = B_1(0) \), we choose \( r_1 = 6 \) and set
\[
r = \frac{6(p - \theta)}{(3 - 2s_c)(p - \theta) - 2(1 - s_c)}.
\]
For the exterior region \( A = B_1^c(0) \), we choose \( r_1 = 2 \). Thus, in both cases, we can use the Sobolev embedding \( H^1 \subset L^{\theta r_1} \).

Furthermore, applying the norm \( \|\cdot\|_{L^{\tilde{a}'}_t} \), we get
\begin{align*}
\|N_{\gamma}(u)\|_{L^{\tilde{a}'}L^{r'}} 
& \lesssim \left\| \|u\|_{L^{r_1}}^{2\theta} \|u\|_{L^r}^{2p - 1 - 2\theta} \right\|_{L^{\tilde{a}'}} \\
& \lesssim \|u\|_{L^{\infty}H^{1}}^{2\theta} \|u\|_{L^a L^r}^{2p - 1 - 2\theta}.
\end{align*}

Note that, with the above value of \( r \), one has\footnote{Observe that \( a = \frac{2(p - \theta)}{1 - s_c} > \frac{2}{1 - s_c} \), implying \( r < 6 \), and hence \( (a, r) \) is \( \dot{H}^{s_c} \)-admissible.
}
\[
a = \frac{2(p - \theta)}{1 - s_c}, \qquad \tilde{a} = \frac{2(p - \theta)}{2(p - \theta)s_c + 1 - s_c},
\]
in view of the fact that \( (a, r) \) is \( \dot{H}^{s_c} \)-admissible and \( (\tilde{a}, r) \) is \( \dot{H}^{-s_c} \)-admissible. Moreover, we can verify that $(2p-1-2\theta)\tilde{a}'=a$.

        
 \item[(iii)] 
Observe that,
\begin{eqnarray*}
|\nabla N_{\gamma}(u)| &\lesssim& (I_{\gamma} * |x|^{-b-1}|u|^p) \, |x|^{-b}|u|^{p-1} + (I_{\gamma} * |x|^{-b}|u|^{p-1}|\nabla u|) \, |x|^{-b}|u|^{p-1} \\
&& + (I_{\gamma} * |x|^{-b}|u|^p) \, |x|^{-b-1}|u|^{p-1} + (I_{\gamma} * |x|^{-b}|u|^p) \, |x|^{-b}|u|^{p-2}|\nabla u|\\
&:=&N_1+N_2+N_3+N_4.
\end{eqnarray*}

We estimate the second term (\(N_2\)), as the others are analogous. For example, 
\[
N_1 = \left(|x|^{-b-1}|u|^p\right) \cdot \left(|x|^{-b}|u|^{p-1}\right) = \left(|x|^{-b}|u|^{p-1}\right) \cdot \left(|x|^{-1}|u|\right) \cdot \left(|x|^{-b}|u|^{p-1}\right).
\]
By Hardy's inequality, \( \||x|^{-1} u\| \lesssim \|\nabla u\| \), so \(N_1\) admits the same bound as \(N_2\). The terms \(N_3\) and \(N_4\) are treated in the same way.
\begin{eqnarray*}
\|N_2\|_{L^{\frac{6}{5}}}
&\lesssim& \| |x|^{-b}|u|^{p-1}|\nabla u|\|_{L^{\beta}} \cdot \||x|^{-b}|u|^{p-1}\|_{L^{\sigma}} \\
&\lesssim& \| |x|^{-b}\|_{L^j(A)} \cdot \||u|^{p-1}\|_{L^n} \cdot \|\nabla u\|_{3^-} \cdot \| |x|^{-b}\|_{L^j(A)} \cdot \||u|^{p-1}\|_{L^n} \\
&\lesssim& \|u\|_{L^{r_1\theta}}^{2\theta} \cdot \|u\|_{\overline{r}}^{2p-2-2\theta} \cdot \|\nabla u\|_{3^-},
\end{eqnarray*}
where the exponents satisfy
\[
\frac{5}{6}+\frac{\gamma}{3} = \frac{1}{\beta} + \frac{1}{\sigma}, \quad \frac{1}{\beta} = \frac{1}{j} + \frac{1}{n} + \frac{1}{3^-}, \quad \frac{1}{\sigma} = \frac{1}{j} + \frac{1}{n}, \quad \frac{1}{n} = \frac{p-1-\theta}{\overline{r}} + \frac{1}{r_1}.
\]

Therefore,
\begin{eqnarray*}
\|N_2\|_{L^2 L^{\frac{6}{5}}}
&\lesssim& \left\| \|u\|_{L^{r_1\theta}}^{2\theta} \cdot \|u\|_{\overline{r}}^{2p-2-2\theta} \cdot \|\nabla u\|_{3^-} \right\|_{L^2} \\
&\lesssim& \|u\|_{L^{r_1\theta}L^{\infty}}^{2\theta} \cdot \|u\|_{L^{\overline{a}}L^{\overline{r}}}^{2(p-1-\theta)} \cdot \|\nabla u\|_{4^+ 3^-},
\end{eqnarray*}
with the condition
\[
\frac{1}{2} = \frac{1}{\infty} + \frac{2(p-1-\theta)}{\overline{a}} + \frac{1}{4^+},
\]
which gives
\[
\overline{a} = \frac{8(p-1-\theta)}{1+2\varepsilon}.
\]

Since the pair \((\overline{a}, \overline{r})\) is \(\dot{H}^{s_c}\)-admissible, we conclude that
\begin{equation}\label{bar{a}}
\overline{r} = \frac{12(p-1-\theta)(p-1)}{2(p-1-\theta)(2-2b+\gamma) - (p-1)(1-2\varepsilon)}.  
\end{equation}
Observe that, to verify the condition $\bar{r}<6$, we must impose\footnote{Here, we need an extra restriction on \(p\) because the estimate involves \(3^-<3\), which is required to ensure the equivalence of Sobolev spaces. Taking the companion exponent \(3^-\) leads to the restriction stated above, while choosing a smaller exponent would give an even stricter bound on \(p\). If the potential \(V\) were absent, no such condition would be necessary, since it would suffice that the parameter remain below 6, the usual condition.
} $p<\frac{5}{2}-2b+\gamma$.
It remains to verify the integrability of \( |x|^{-b} \). From the previous identities, we have
\[
\frac{5}{6} + \frac{\gamma}{3} = \frac{1}{\beta} + \frac{1}{\sigma} = \frac{2}{j} + \frac{2}{n} + \frac{1}{3^-}.
\]
Substituting \( 3^- = \frac{3}{1+\varepsilon} \), we find $\frac{3}{j} = \frac{3}{4} + \frac{\gamma}{2} - \frac{3}{n} - \frac{\varepsilon}{2}$. Using the identity \( \frac{1}{n} = \frac{p-1-\theta}{\overline{r}} + \frac{1}{r_1} \) and the expression for \( \overline{r} \), we deduce
\[
\frac{3}{j} - b = \frac{\theta(2+\gamma-2b)}{2(p-1)} - \frac{3}{r_1}.
\]
Now assume \( A = B \), and take \( \theta r_1 = 6 \). We get
\[
\frac{3}{j} - b = \theta(1 - s_c) > 0.
\]
On the other hand, suppose \( A = B^c \), and let \( \theta r_1 = 2 \). Proceeding in the same way, we obtain
\[
\frac{3}{j} - b = -\theta s_c < 0.
\]
Thus, in both cases we conclude that \( \||x|^{-b}\|_{L^j(A)} < \infty \), completing the proof.

\item [(ii)] The proof is analogous to the estimate for \( N_2 \), replacing \( \nabla u \) with \( u \) in the expression for \( N_2 \).

 \end{itemize}

\end{proof}

\ Now that the non-linear estimates have been established, we are in a position to prove the global well-posedness result. 

\begin{proof}[\bf Proof of Propostion \ref{GWPH2}]
Let \( E > 0 \) and assume \( u_0 \in H^1(\mathbb{R}^3) \) with \( \|u_0\|_{H^1} \leq E \). Define the complete metric space
\[
B = \left\{ u \in X \,:\, \|u\|_{S(\dot{H}^{s_c})} \leq b, \ \|u\|_{S(L^2)} + \|\Lambda u\|_{S(L^2)} \leq a \right\},
\]
equipped with the distance \( d(u,v) = \|u - v\|_{S(L^2)} \). Define the operator
\[
\Phi(u)(t) := e^{-it\mathcal{H}} u_0 + i \int_0^t e^{-i(t-s)\mathcal{H}} N_\gamma(u(s)) \, ds.
\]
We verify that \( \Phi \colon B \to B \) is a contraction. The Strichartz estimates and Lemma~\ref{L:NL} yield:
\begin{align*}
\|\Phi(u)\|_{S(L^2)} &\leq \|e^{-it\mathcal{H}} u_0\|_{S(L^2)} + \left\| \int_0^t e^{-i(t-s)\mathcal{H}} N_\gamma(u(s)) \, ds \right\|_{S(L^2)} \\
&\leq c \|u_0\|_{L^2} + c \|N_\gamma(u)\|_{S'(L^2)} \\
&\leq c \|u_0\|_{L^2} + c \|u\|_{L^\infty L^2}^{\theta} \|u\|_{S(\dot{H}^{s_c})}^{2(p-1-\theta)} \|u\|_{S(L^2)}.
\end{align*}
Similarly, using the equivalence of norms, we get:
\begin{align*}
\|\Lambda \Phi(u)\|_{S(L^2)} &\leq \|e^{-it\mathcal{H}} \nabla u_0\|_{S(L^2)} + \left\| \int_0^t e^{-i(t-s)\mathcal{H}} \nabla N_\gamma(u(s)) \, ds \right\|_{S(L^2)} \\
&\leq c \|\nabla u_0\|_{L^2} + c \|u\|_{L^\infty H^1}^\theta \|u\|_{S(\dot{H}^{s_c})}^{2(p-1-\theta)} \|\nabla u\|_{L^{4^+}L^{3^-}}.
\end{align*}
Finally,
\begin{align*}
\|\Phi(u)\|_{S(\dot{H}^{s_c})} &\leq \|e^{-it\mathcal{H}} u_0\|_{S(\dot{H}^{s_c})} + \left\| \int_0^t e^{-i(t-s)\mathcal{H}} N_\gamma(u(s)) \, ds \right\|_{S(\dot{H}^{s_c})} \\
&<\delta + c \|u\|_{L^\infty H^1}^{2\theta} \|u\|_{L^a L^r}^{2p-1-2\theta}.
\end{align*}

Thus\footnote{Recalling the notation above, we write, $\|\langle\Lambda \rangle  \Phi (u)\|_{S(L^2)} :=  \| \Phi(u)\|_{S(L^2)} + \| \Lambda \Phi(u)\|_{S(L^2)}$.},
\[
\|\langle\Lambda \rangle \Phi (u)\|_{S(L^2)}  \leq c\|u_0\|_{H^1}+c a^{2\theta}b^{2(p-1-\theta)}a,
\quad \textnormal{and} \quad
\|\Phi(u)\|_{S(\dot{H}^{s_c})} < \delta + c a^{2\theta} b^{2p - 1 - 2\theta} .
\]

Combining
\begin{equation}\label{choise}
\delta = \frac{b}{2},\quad \frac{a}{2}=c\|u_0\|_{H^1}, \quad \textnormal{and} \quad b\leq \left(\frac{1}{4ca^{2\theta}}\right)^{\frac{1}{2(p-1-\theta)}},
\end{equation}
we obtain \( \Phi(u) \in B \).

\medskip

\ Now we prove that \( \Phi \) is a contraction:
\begin{align*}
\|\Phi(u) - \Phi(v)\|_{S(L^2)} 
&= \left\| \int_0^t e^{-it\mathcal{H}} \left[ N_\gamma(u(s)) - N_\gamma(v(s)) \right] \, ds \right\|_{S(L^2)} \\
&\lesssim \left\| N_\gamma(u) - N_\gamma(v) \right\|_{S'(L^2)}.
\end{align*}

Adding and subtracting the term 
\[
(I_\gamma * |x|^{-b}|u|^p)\,|x|^{-b}|v|^{p-2}v
\]
in the expression above, we write:
\begin{align*}
& (I_\gamma * |x|^{-b}|u|^p)\left( |x|^{-b}|u|^{p-2}u - |x|^{-b}|v|^{p-2}v \right) \\
&\quad + |x|^{-b}|v|^{p-2}v \left( I_\gamma * \left( |x|^{-b}|u|^p - |x|^{-b}|v|^p \right) \right).
\end{align*}

This implies:
\[
(I_\gamma * |x|^{-b}|u|^p) |x|^{-b} (|u|^{p-2} + |v|^{p-2}) |u - v|
+ |x|^{-b}|v|^{p-2}v \left( I_\gamma * \left[ |x|^{-b}(|u|^{p-1} + |v|^{p-1}) |u - v| \right] \right).
\]

Applying Lemma~\ref{L:NL}, we obtain:
\[
\|\Phi(u) - \Phi(v)\|_{S(L^2)} \leq 2c a^{2\theta}b^{2p-1-2\theta} d(u,v).
\]

Therefore, by the choice of parameters \( a \) and \( b \) in \eqref{choise}, \( \Phi \) is a contraction on \( B \), completing the proof.
\end{proof}

\section{Scattering criterion}

\vspace{0.3cm}

\ In this section, we establish the scattering criterion, following the approach of Murphy in~\cite{MurphyNonradial}, which is based on a mass evacuation condition. This criterion is in the spirit of the scattering framework introduced by Tao in~\cite{Tao}.

\begin{proposition}\label{Scattering}(Scattering criterion). Let $0<b<\frac{1+\gamma}{2}$ and $\frac{5-2b+\gamma}{3}<p<\frac{5-4b+2\gamma}{2}$. Let $V: \mathbb{R}^3 \rightarrow \mathbb{R}$ satisfy \eqref{V-potential1} and \eqref{V-potential2}. If $u\in H^{1}(\mathbb{R}^{3})$ is solution to \eqref{IGHP} defined on $[0,+\infty)$ and assume the priori bound
\begin{equation}\label{HT1}
\sup _t\|u(t)\|_{H^{1}(\mathbb{R}^{3})}=E<+\infty.
\end{equation}

There exists constants $R>0$ and $\varepsilon>0$ depending only on $E, p$ and $b$ (but not never on $u$ or $t$ ) such that if
\begin{equation}\label{HT2}
\liminf _{t \rightarrow \infty} \int_{B(0, R)}|u(t, x)|^2 d x \leq \varepsilon^2,
\end{equation}
then $u$ scatters forward in time in $H^{1}(\mathbb{R}^{3})$.
\end{proposition}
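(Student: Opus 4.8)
The plan is to follow Tao's scattering criterion in the form adapted by Dodson--Murphy and Murphy~\cite{MurphyNonradial}: it suffices to show that the Duhamel term from any large time onward is small in the scattering norm $\|\cdot\|_{S(\dot H^{s_c})}$, and then bootstrap via the small-data theory (Proposition~\ref{GWPH2}) to conclude that $u$ scatters. Concretely, I would first fix $\varepsilon>0$ small (to be chosen depending on $E,p,b$) and $R=R(\varepsilon,E)$ large, and use the hypothesis~\eqref{HT2} to select a sequence $t_n\to\infty$ along which $\int_{B(0,R)}|u(t_n,x)|^2\,dx\le\varepsilon^2$; passing to a further time $T_n$ slightly larger, one upgrades this pointwise-in-time smallness to a space-time statement on a window $[t_n,t_n+T_0]$ of fixed length $T_0=T_0(\varepsilon)$, controlling $\|u\|_{L^q_tL^r_x([t_n,t_n+T_0]\times\mathbb{R}^3)}$ for the relevant $\dot H^{s_c}$-admissible pair via a local-in-time analysis (interpolating the local-mass smallness with the uniform $H^1$ bound~\eqref{HT1} and Sobolev embedding, as in the NLS case).

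The core estimate is to show that, writing $u(t)=e^{-i(t-T_n)\mathcal H}u(T_n)+i\int_{T_n}^t e^{-i(t-s)\mathcal H}N_\gamma(u(s))\,ds$ for $t\ge T_n$, the free evolution $e^{-i(t-T_n)\mathcal H}u(T_n)$ has small $S(\dot H^{s_c},[T_n,\infty))$ norm for $n$ large. This is where the dispersive estimate~\eqref{DE1} and Strichartz estimates~\eqref{CSE1}--\eqref{CSE2} enter: one splits $u(T_n)=e^{-iT_0\mathcal H}e^{-i(T_n-T_0)\mathcal H}u(T_n-T_0)+\dots$ and rewrites $e^{-i(t-T_n)\mathcal H}u(T_n)$ using the Duhamel formula on $[t_n,T_n]$, so that the free piece is controlled by the incoming data at time $t_n$ (whose local mass is $\le\varepsilon^2$, handled by the dispersive decay of $e^{-it\mathcal H}$ against a cutoff) plus a nonlinear term on the short window $[t_n,T_n]$ that is small by the nonlinear estimates of Lemma~\ref{L:NL} together with the space-time smallness established in the first step. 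The presence of the potential $V$ requires that all of these manipulations be done with $e^{-it\mathcal H}$ rather than $e^{it\Delta}$, which is legitimate precisely because~\cite{HONG} supplies the same dispersive and Strichartz estimates; the weight $|x|^{-b}$ is absorbed into $N_\gamma$ and controlled exactly as in Lemma~\ref{L:NL}.

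Once $\|e^{-i(t-T_n)\mathcal H}u(T_n)\|_{S(\dot H^{s_c},[T_n,\infty))}<\delta_{sd}(E)$ for some large $n$, Proposition~\ref{GWPH2} applied with initial time $T_n$ gives $\|u\|_{S(\dot H^{s_c},[T_n,\infty))}\le 2\delta_{sd}$ and finite $S(L^2)$ and $\Lambda$-$S(L^2)$ norms on $[T_n,\infty)$. Combined with the local theory on $[0,T_n]$ this yields global finite Strichartz norms, and then a standard argument (Cauchy criterion on $t\mapsto e^{it\mathcal H}u(t)$ in $H^1$, using~\eqref{CSE2} and Lemma~\ref{L:NL} to bound the Duhamel tails, plus the equivalence $\|\Lambda\cdot\|_{L^r}\sim\|\nabla\cdot\|_{L^r}$ for $r<3$) produces the scattering state $u_+\in H^1(\mathbb{R}^3)$. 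The main obstacle I expect is the second paragraph: propagating the local-mass smallness at time $t_n$ into smallness of the free evolution $e^{-i(t-T_n)\mathcal H}u(T_n)$ uniformly on the infinite interval $[T_n,\infty)$ in the $S(\dot H^{s_c})$ norm, since this must be done carefully with the admissible pairs that survive the constraint $\bar r<6$ (hence the restriction $p<\tfrac52-2b+\gamma$), and since the decay used against the spatial cutoff comes only from the $L^1\to L^\infty$ dispersive bound~\eqref{DE1} for $e^{-it\mathcal H}$, which has no explicit kernel — so one must argue via duality and interpolation rather than pointwise kernel bounds as would be available for the free Schrödinger group.
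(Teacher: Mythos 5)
Your overall frame (make the forward free evolution from a late time small in $S(\dot H^{s_c})$, then invoke Proposition~\ref{GWPH2} and close with the standard Cauchy-in-$H^1$ argument) is the same as the paper's, but the core step in your second paragraph has a genuine gap. You try to obtain smallness of the free piece $e^{-i(t-t_n)\mathcal H}u(t_n)$ on $[T_n,\infty)$ from the local-mass smallness $\int_{B(0,R)}|u(t_n)|^2\,dx\le\varepsilon^2$ together with dispersive decay ``against a cutoff.'' This cannot work: writing $u(t_n)=\eta_R u(t_n)+(1-\eta_R)u(t_n)$, the exterior part carries $O(E)$ of the $H^1$ norm, and the Strichartz estimate \eqref{CSE1} only gives $\|e^{-i(t-t_n)\mathcal H}(1-\eta_R)u(t_n)\|_{S(\dot H^{s_c},[T_n,\infty))}\lesssim E$, with no gain from the time gap $T_n-t_n$; the dispersive bound \eqref{DE1} would require an $L^{r'}$-type (spatially decaying) norm of $(1-\eta_R)u(t_n)$, which the uniform $H^1$ bound does not provide. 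Nor can you appeal to tail smallness of the linear flow, since that is a monotone-convergence statement for a \emph{fixed} profile, while your profiles $u(t_n)$ move with $n$. In short, local-mass smallness at one time does not make the free evolution of that time slice small in the scattering norm.

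The paper resolves exactly this point by pushing Duhamel all the way back to $t=0$ and splitting into \emph{three} pieces: $e^{-i(t-T)\mathcal H}u(T)=e^{-it\mathcal H}u_0+iF_1+iF_2$, with $F_1$ over the recent past $I_1=[T-\varepsilon^{-\beta},T]$ and $F_2$ over the distant past $I_2=[0,T-\varepsilon^{-\beta}]$ (Lemma~\ref{smalldatascattering}). The linear term is the free evolution of the \emph{fixed} datum $u_0$, so its tail on $[T_0,\infty)$ is small by \eqref{CSE1}; the recent-past term $F_1$ is where \eqref{HT2} enters, with the local mass propagated backwards over $I_1$ via the mass-flux identity (taking $R>\varepsilon^{-(\beta+2)}$) and the exterior region handled by the decay of $|x|^{-b}$ — this is the non-radial substitute for radial Sobolev — before applying the Kato estimate \eqref{CSE2}; the distant-past term $F_2$ is rewritten as a difference of free evolutions and made small by interpolating an $L^2$-admissible Strichartz bound with the dispersive estimate \eqref{DE1}, exploiting the time separation $\ge\varepsilon^{-\beta}$, where the integrand $N_\gamma(u(s))$ (unlike $u(t_n)$ itself) does lie in a dual Lebesgue space by Lemma~\ref{L:NL} and \eqref{HT1}. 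Your proposal contains neither the fixed-datum tail argument nor any treatment of the distant past, and these are precisely the ingredients needed to fill the obstacle you yourself flagged; your first step (upgrading \eqref{HT2} to control on a window) and your final bootstrap are fine and match the paper.
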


\ The proof is based on the following lemma.
\begin{lemma}[Small data scattering]\label{smalldatascattering}
Let $0 < b <\frac{1+\gamma}{2}$ and $\frac{5-2b+\gamma}{3}< p <\frac{5-4b+2\gamma}{2}$. Let $V:\mathbb{R}^3\rightarrow\mathbb{R}$ satisfy \eqref{V-potential1}, \eqref{V-potential2} and $u\in H^{1}(\mathbb{R}^{3})$ be a (possibly non-radial) solution to \eqref{IGHP}  satisfying \eqref{HT1}. If $u$ satisfies \eqref{HT2} for some $0 < \epsilon < 1$, then there exist $\gamma, T > 0$ such that 
\begin{equation}\label{norm-small}
\left\|e^{i(\cdot-T)\mathcal{H}}u(T)\right\|_{S\left(\dot{H}^{s_c}, [T,+\infty) \right)}  \lesssim\epsilon^\gamma.
\end{equation}
\end{lemma}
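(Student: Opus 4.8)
The plan is to use the Duhamel formula together with the small-data global theory of Proposition \ref{GWPH2}. The idea is that once the local mass inside a ball of radius $R$ becomes small (by \eqref{HT2}), one can split the linear evolution from a well-chosen time $T$ into two pieces: a "tail" piece coming from the free evolution of the localized data, and a Duhamel piece, and show that the $S(\dot H^{s_c})$-norm of $e^{i(\cdot-T)\mathcal H}u(T)$ on $[T,+\infty)$ is controlled by a positive power of $\epsilon$, after which Proposition \ref{GWPH2} applies with $\delta_{sd}$ replaced by $\epsilon^{\gamma}$.

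First I would write, by the Duhamel formula at time $T$,
\[
e^{i(t-T)\mathcal H}u(T)=e^{it\mathcal H}u_0+i\int_0^T e^{i(t-s)\mathcal H}N_\gamma(u(s))\,ds,
\]
and then re-expand the second term so as to isolate the contribution of a short window $[T-\tau,T]$ near $T$ from the contribution of $[0,T-\tau]$. Concretely, one sets
\[
e^{i(t-T)\mathcal H}u(T)= e^{i(t-(T-\tau))\mathcal H}u(T-\tau)+i\int_{T-\tau}^{T}e^{i(t-s)\mathcal H}N_\gamma(u(s))\,ds,
\]
and estimates each term on $[T,+\infty)$ in $S(\dot H^{s_c})$. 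The Duhamel term over the short window $[T-\tau,T]$ is handled by the Kato inhomogeneous Strichartz estimate \eqref{CSE2} together with the nonlinear estimate Lemma \ref{L:NL}(i), giving a bound
\[
\Big\|\int_{T-\tau}^{T}e^{i(\cdot-s)\mathcal H}N_\gamma(u(s))\,ds\Big\|_{S(\dot H^{s_c},[T,\infty))}
\lesssim \|u\|^{2\theta}_{L^\infty_t H^1_x}\,\|u\|^{2p-1-2\theta}_{L^{a}_{t}L^{r}_{x}([T-\tau,T]\times\mathbb R^3)},
\]
which, using the a priori bound \eqref{HT1} and choosing $\tau$ small (a fixed constant depending only on $E,p,b$), can be made as small as a fixed power of $\epsilon$; here one must be slightly careful that the interval $[T-\tau,T]$ has finite length so the time-integrability exponent $a<\infty$ actually gains smallness, which is why $\tau$ has to be chosen as a function of $\epsilon$ as well, or one absorbs it using Hölder in time against the global Strichartz bound of $u$.

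The genuinely delicate term is the free evolution piece $e^{i(t-(T-\tau))\mathcal H}u(T-\tau)$. The point of the hypothesis \eqref{HT2} is that, along a sequence $t_n\to\infty$, the mass of $u(t_n)$ in $B(0,R)$ is $\le\epsilon^2$; combined with the uniform $H^1$ bound \eqref{HT1} and an interpolation/Gagliardo–Nirenberg argument one deduces that $u(t_n)$ is small in $L^{q}(B(0,R))$ for suitable $q$, while outside $B(0,R)$ one exploits that the remaining mass has small "effective support" — this is where the Dodson–Murphy / Murphy trick of further Duhamel expansion and the dispersive decay \eqref{DE1} of $e^{-it\mathcal H}$ enter, to show that the free flow of $u(t_n)$ on $[t_n,\infty)$ has small $S(\dot H^{s_c})$-norm. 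Choosing $T=t_n$ for $n$ large then gives \eqref{norm-small} with some explicit $\gamma>0$ depending on $s_c$, $p$, $b$. I expect the main obstacle to be exactly this step: producing the smallness of $\|e^{i(\cdot-T)\mathcal H}u(T)\|_{S(\dot H^{s_c},[T,\infty))}$ from the local $L^2$-smallness, which in the potential setting requires using the Strichartz and dispersive estimates for $e^{-it\mathcal H}$ (Lemmas from \cite{HONG}) rather than the explicit free kernel, and care with the weight $|x|^{-b}$ in splitting the nonlinearity into interior/exterior regions as in the proof of Lemma \ref{L:NL}. Once \eqref{norm-small} is in hand, Proposition \ref{GWPH2} (applied on $[T,+\infty)$ with initial datum $u(T)$) upgrades it to global Strichartz control of $u$ on $[T,\infty)$, hence scattering.
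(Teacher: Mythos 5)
Your overall skeleton (Duhamel at time $T$, a recent-past window handled with the Kato estimate \eqref{CSE2} and Lemma \ref{L:NL}(i), dispersive decay for what remains, then Proposition \ref{GWPH2}) matches the paper, but the mechanism you assign to each piece is off in a way that breaks the argument. In the paper the recent-past window is \emph{long}, of length $\epsilon^{-\beta}$, not a small $\tau$: the smallness of $\int_{I_1}e^{-i(t-s)\mathcal H}N_\gamma(u(s))\,ds$ is exactly where the hypothesis \eqref{HT2} enters. One propagates the local-mass smallness backwards over the whole window via the mass-flux bound $\bigl|\tfrac{d}{dt}\int\eta_R|u|^2\,dx\bigr|\lesssim 1/R$ (which forces $R>\epsilon^{-(\beta+2)}$), splits $N_\gamma(u)$ with the cutoff $\eta_R$, interpolates $\|\eta_R u\|_{L^r}\leq\|u\|_{L^{p^*}}^{1-\hat\theta}\|\eta_R u\|_{L^2}^{\hat\theta}\lesssim\epsilon^{\hat\theta}$ in the interior, exploits the decay of $|x|^{-b}$ on $\{|x|>R/2\}$ in the exterior, and only then pays the factor $|I_1|^{1/\tilde a'}=\epsilon^{-\beta/\tilde a'}$ from H\"older in time, choosing $\beta=\tilde a'\hat\theta/2$ so the net power of $\epsilon$ stays positive. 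Your alternative --- take $\tau$ a fixed small constant and control $\|u\|_{L^a_tL^r_x([T-\tau,T])}$ from the a priori $H^1$ bound, or ``H\"older against the global Strichartz bound of $u$'' --- does not work: no global space-time bound on $u$ is available at this stage (producing it is the whole point of the scattering criterion, so invoking it is circular), and a fixed small $\tau$ yields smallness independent of $\epsilon$, not the required $\epsilon^\gamma$, while destroying the treatment of the remaining piece.

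Second, you attribute \eqref{HT2} to the free-evolution piece $e^{i(t-(T-\tau))\mathcal H}u(T-\tau)$, asserting that local mass smallness makes its $S(\dot H^{s_c},[T,\infty))$-norm small; this is not true in general and you give no workable argument for it. The paper instead splits this piece as $e^{-it\mathcal H}u_0$ plus the distant-past integral $F_2$ over $I_2=[0,T-\epsilon^{-\beta}]$: the first is small on $[T_0,\infty)$ by \eqref{CSE1} and monotone convergence (choice of $T_0$), and $F_2$ is small because every source time in $I_2$ is separated from every evaluation time $t\geq T$ by at least $\epsilon^{-\beta}$, so the dispersive estimate \eqref{DE1} produces a decay $\epsilon^{\beta\delta}$, which is then interpolated against an $L^2$-level Strichartz bound to land in the $\dot H^{s_c}$-admissible norm. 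With a short window $\tau$ the temporal separation is only $\tau$ and the dispersive gain vanishes, so your decomposition as stated cannot close; this backward mass propagation over a long window plus the separation-driven dispersive estimate for the distant past are the missing ideas.
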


\begin{proof} We divide the proof into three steps.

\textbf{Step 1 – Linear estimate.}  
Fix parameters \( \alpha, \beta > 0 \). By the Strichartz estimate \eqref{CSE1}, there exists \( T_0 > \varepsilon^{-\beta} \) such that
\begin{equation}\label{T0}
\left\|e^{-it\mathcal{H}}u_0\right\|_{S\left(\dot{H}^{s_c}, [T_0,+\infty) \right)} \leq \varepsilon^{\alpha}.
\end{equation}

For any \( T \geq T_0 \), define $I_1 := [T - \varepsilon^{-\beta}, T]$ and  $ I_2 := [0, T - \varepsilon^{-\beta}].$ Let \( \eta \) be a smooth radial cutoff with \( \eta = 1 \) on \( B(0, 1/2) \) and \( \eta = 0 \) outside \( B(0,1) \), and set \( \eta_R(x) := \eta(x/R) \).

\ From Duhamel's formula,
\[
e^{-i(t-T)\mathcal{H}}u(T) = e^{it\mathcal{H}}u_0 + i(F_1 + F_2), \quad F_i := \int_{I_i} e^{-i(t-s)\mathcal{H}} N_{\gamma}(u(s)) \, ds.
\]
It remains to estimate \( F_1 \) and \( F_2 \).

\textbf{Step 2 – Estimate on recent past.}  
By \eqref{HT2}, we can choose \( T \geq T_0 \) such that
\[
\int \eta_R(x) |u(T,x)|^2 dx \lesssim \varepsilon^2.
\]
Multiplying \eqref{IGHP} by \( \eta_R \bar{u} \), taking the imaginary part and integrating by parts:
\begin{align}
\left| \frac{d}{dt} \int \eta_R |u|^2 dx \right|
&= \left| 2 \int \nabla \eta_R \cdot \operatorname{Im}(\bar{u} \nabla u) dx \right| \\
&\lesssim \|\nabla \eta_R\|_{L^\infty} \|u\|_{L^2} \|\nabla u\|_{L^2} \lesssim \frac{1}{R}.
\end{align}
Then for \( t \leq T \),
\begin{align}
\int \eta_R |u(t)|^2 dx &\leq \int \eta_R |u(T)|^2 dx + \frac{T - t}{R} \lesssim \varepsilon^2 + \frac{\varepsilon^{-\beta}}{R}.
\end{align}
Thus, for \( R > \varepsilon^{-(\beta + 2)} \), $\| \eta_R u \|_{L^\infty_{I_1} L^2_x} \lesssim \varepsilon^2.$

\ Now, using the pair \( (\widetilde{a}, r) \in A_{-s_c} \) from Lemma \ref{L:NL}, and applying Hölder and Sobolev for \( t \in I_1 \),
\begin{equation}\label{recent_past_ball1}
\| \eta_R N_\gamma(u) \|_{L_x^{r'}} \lesssim \|u(t)\|_{H^1}^{2\theta} \|u(t)\|_{L^r}^{2(p-1-\theta)} \|\eta_R u(t)\|_{L^r} \lesssim \|\eta_R u(t)\|_{L^r}.
\end{equation}
Let \( \hat{\theta} \in (0,1) \) solve \( \frac{1}{r} = \frac{\hat{\theta}}{2} + \frac{1 - \hat{\theta}}{p^*} \). Then,
\begin{equation}\label{recent_past_ball2}
\|\eta_R u(t)\|_{L^r} \leq \|u(t)\|_{L^{p^*}}^{1 - \hat{\theta}} \|\eta_R u(t)\|_{L^2}^{\hat{\theta}} \lesssim \varepsilon^{\hat{\theta}},
\end{equation}
uniformly in \( t \in I_1 \).

\ The exterior region is estimated using the decay of the nonlinearity, avoiding radial symmetry
\begin{align}\label{recent_past_out}
\| (1 - \eta_R) N_\gamma(u) \|_{L^{r'}} 
&\lesssim \| (I_\gamma * |x|^{-b} |u|^p) |x|^{-b} |u|^{p-2} u \|_{L_{\{|x| > R/2\}}^{r'}} \nonumber \\
&\lesssim \| |x|^{-b} \|_{L^{r_1}_{\{|x| > R/2\}}} \|u(t)\|^{2p - 1}_{H^1} \lesssim \varepsilon^{\hat{\theta}},
\end{align}
where \( br_1 > 3 \), \( \theta r_2 \in (2, 3p/(2-b)) \).

\ Combining \eqref{CSE2}, \eqref{recent_past_ball1}, \eqref{recent_past_ball2}, and \eqref{recent_past_out}:
\begin{align*}
\left\| \int_{I_1} e^{-i(t-s)\mathcal{H}} N_\gamma(u(s)) \, ds \right\|_{S(\dot{H}^{s_c}, [T, \infty))} 
&\lesssim \| \eta_R N_\gamma \|_{L^{\widetilde{a}'}_{I_1} L^{r'}_x} + \| (1 - \eta_R) N_\gamma \|_{L^{\widetilde{a}'}_{I_1} L^{r'}_x} \\
&\lesssim |I_1|^{1/\widetilde{a}'} \varepsilon^{\hat{\theta}} = \varepsilon^{\hat{\theta} - \beta / \widetilde{a}'} = \varepsilon^{\hat{\theta}/2},
\end{align*}
where \( \beta := \widetilde{a}' \hat{\theta} / 2 \).
 
\textbf{Step 3 – Estimate on the distant past.}
We now estimate \( F_2 \). Let \( (a, r) \in A_{s_c} \), and define
\begin{equation}
\frac{1}{c} = \left(\frac{1}{1-s_c}\right)\left[\frac{1}{a} - \delta s_c\right],
\end{equation}
and
\begin{equation}
\frac{1}{d} = \left(\frac{1}{1-s_c}\right)\left[\frac{1}{r} - s_c\left(\frac{3 - 2 - 4\delta}{6}\right)\right],
\end{equation}
for some small \( \delta > 0 \). It is easy to verify that the pair \( (c, d) \) is \( \Lambda_0 \)-admissible. By interpolation
\[
\|F_2\|_{L^a_t L^r_x([T, +\infty))} \leq 
\|F_2\|_{L^c_t L^d_x([T, +\infty))}^{1-s_c}
\|F_2\|_{L^{1/\delta}_t L^{6/(3 - 2 - 4\delta)}_x([T, +\infty))}^{s_c}.
\]

We now rewrite \( F_2 \) using Duhamel’s formula:
\[
F_2 = e^{-it\mathcal{H}}\left[e^{-i(-T + \varepsilon^{-\beta})\mathcal{H}}u(T - \varepsilon^{-\beta}) - u(0)\right].
\]
Applying the Strichartz estimate \eqref{CSE1}, we deduce
\begin{align}
\|F_2\|_{L^a_t L^r_x([T, +\infty))} 
&\leq 
\left\| e^{-it\mathcal{H}}\left[e^{-i(-T + \varepsilon^{-\beta})\mathcal{H}}u(T - \varepsilon^{-\beta}) - u(0)\right] \right\|_{L^c_t L^d_x}^{1-s_c}
\|F_2\|_{L^{1/\delta}_t L^{6/(3 - 2 - 4\delta)}_x}^{s_c} \\
&\lesssim 
\left(\|u\|_{L^\infty_t L^2_x} \right)^{1 - s_c}
\|F_2\|_{L^{1/\delta}_t L^{6/(3 - 2 - 4\delta)}_x}^{s_c}
\lesssim \varepsilon^{\beta \delta s_c}.
\end{align}

Next, using the decay estimate for the free Schrödinger group,
\[
\|e^{-it\mathcal{H}} f\|_{L^r_x} \lesssim \frac{1}{|t|^{\frac{3}{2}(\frac{1}{r'} - \frac{1}{r})}} \|f\|_{L^{r'}_x}, \quad \forall r \geq 2,
\]
combined with \eqref{DE1}, we obtain
\begin{align}
\|F_2\|_{L^{1/\delta}_t L^{6/(3 - 2 - 4\delta)}_x([T, +\infty))}
&\lesssim \left\| \int_{I_2} |\cdot - s|^{-(1 + 2\delta)} \|N_\gamma(u(s))\|_{L^{6/(3 + 2 + 4\delta)}_x} ds \right\|_{L^{1/\delta}_t} \\
&\lesssim \|u\|_{L^\infty_t H^1_x}^p \left\| (\cdot - T + \varepsilon^{-\mu})^{-2\delta} \right\|_{L^{1/\delta}_t([T, +\infty))} \lesssim \varepsilon^{\beta \delta}.
\end{align}

Finally, define $
\gamma := \min\left\{ \frac{\hat{\theta}}{2}, \beta \delta s_c \right\},$
and recall that
\[
e^{-i(t - T)\mathcal{H}} u(T) = e^{-it\mathcal{H}} u_0 + i F_1 + i F_2.
\]
Combining the estimates for \( F_1 \) and \( F_2 \), we conclude that
\begin{equation}
\left\|e^{-i(t - T)\mathcal{H}} u(T) - e^{-it\mathcal{H}} u_0 \right\|_{S(\dot{H}^{s_c}, [T, +\infty))} \leq C \varepsilon^\gamma,
\end{equation}
which completes the proof.
\end{proof}

\begin{proof}[\bf Proof of Proposition \ref{Scattering}]
We only need to prove that \( \| u \|_{S(\dot{H}^{s_c}, [T, +\infty))} < \infty \), since we already have \( u \in L^\infty_t H^1_x \); in this case, the finiteness of both norms implies scattering. In fact, choose \( \varepsilon > 0 \) sufficiently small so that, by Lemma~\ref{smalldatascattering}, we obtain
\[
\left\|e^{-i(\cdot)\mathcal{H}}u(T)\right\|_{S\left(\dot{H}^{s_c}, [0, +\infty)\right)} 
= \left\|e^{-i(\cdot - T)\mathcal{H}} u(T)\right\|_{S\left(\dot{H}^{s_c}, [T, +\infty)\right)} 
\leq C \varepsilon^\gamma \leq \delta_{sd},
\]
so that the \( S(\dot{H}^{s_c}) \)-norm is bounded by the global small data theory, as stated in Proposition~\eqref{GWPH2}.
\end{proof}
\begin{remark}\label{important}
As already noted in the proof of the previous proposition, scattering follows once both norms are finite. The argument is standard. For \( \eta > 0 \), write $[T, +\infty) = \bigcup_{j=1}^{N} I_j,$ where the intervals \( I_j \) are chosen so that
$\|u\|_{L^a_t(I_j; L^r_x)} < \eta.$ By the Strichartz estimates, it follows that
$\|\nabla u\|_{S(L^2, [T, +\infty))} < \infty.$ Define
\[
\phi^+ := e^{-it\mathcal{H}}u(T) + i \int_T^{+\infty} e^{-i(t-s)\mathcal{H}} N(u)(s) \, ds.
\]
Then, by the nonlinear estimate (Lemma \ref{L:NL} (i-ii))
\[
\|u(t) - e^{-it\mathcal{H}} \phi^+\|_{H^1} \lesssim \|u\|_{L^a_t([t, +\infty); L^r_x)}^{2p - 2} \|\nabla u\|_{L^{4+}_t([t, +\infty); L^{3-}_x)} \to 0, \quad \text{as}\;\; t \to +\infty.
\]

\ This conclusion relies on the nonlinear estimates from Lemma~\ref{L:NL} and the use of an admissible pair compatible with the \( S(\dot{H}^{s_c}) \) framework, ensuring that scattering is rigorously established in the critical norm.

\end{remark}

\section{Variational Analysis}

\vspace{0.3cm}

We recall key properties of the ground state \( Q \), which is a positive, radial, and decreasing solution to
\begin{equation*}
\Delta Q - Q + (I_{\gamma} * |x|^{-b} |Q|^{p}) |x|^{-b} |Q|^{p-2} Q = 0.
\end{equation*}

The function \( Q \) achieves the sharp constant in the Gagliardo–Nirenberg inequality:
\begin{equation}
\int_{\mathbb{R}^N} (I_{\gamma} * |x|^{-b} |u|^{p}) |x|^{-b} |u|^{p} \, dx \leq C_0 \|u\|_{L^2}^{A} \|\nabla u\|_{L^2}^{B},
\end{equation}
with exponents $B = 3p - 3 + \gamma - 2b = 2(p - 1)s_c + 2$ and $ A = 2(p - 1)(1 - s_c)$. Moreover, \( Q \) satisfies
\[
\|\nabla Q\|_{L^2}^2 = \frac{B}{A} \|Q\|_{L^2}^2,
\]
and
\begin{equation}
\int_{\mathbb{R}^N} (I_{\gamma} * |x|^{-b} |Q|^{p}) |x|^{-b} |Q|^{p} \, dx = \frac{2p}{B} \|\nabla Q\|_{L^2}^2.
\end{equation}
Thus,
\begin{equation}
\frac{2p}{B} \leq C_0 \|u\|_{L^2}^{2(p-1)(1 - s_c)} \|\nabla u\|_{L^2}^{2(p-1)s_c}.
\end{equation}

The Pohozaev identities yield
\[
E_0(Q) = \frac{B - 2}{2B} \|\nabla Q\|_{L^2}^2 = \frac{B - 2}{2A} \|Q\|_{L^2}^2,
\]
where \( E_0(Q) = \frac{1}{2} \|\nabla Q\|_{L^2}^2 - \frac{1}{2p} P(Q) \). From this,
\begin{align*}
\frac{1}{2p} P(Q) &= \left( \frac{1}{2} - \frac{B - 2}{2B} \right) \|\nabla Q\|_{L^2}^2 = \frac{1}{B} \|\nabla Q\|_{L^2}^2, \\
\Rightarrow \quad \|\nabla Q\|_{L^2} &= \left( \frac{B}{2p} P(Q) \right)^{1/2}.
\end{align*}

\subsection{Coercivity}

Let $\chi:\mathbb{R}^{3} \to [0,+\infty)$ be a smooth cutoff function satisfying \( 0 \leq \chi(x) \leq 1 \), and
\begin{equation*}
\chi(x)=\begin{cases}
    1, & |x| < \frac{1}{2}, \\
    0, & |x| > 1.
\end{cases}
\end{equation*}
For any \( R > 0 \), define the rescaled cutoff \( \chi_R(x) := \chi\left(\frac{x}{R}\right) \). In particular, it is easy to verify that
\[
\|\chi_R u\|_{L^2} \leq \|u\|_{L^2} \quad \text{and} \quad P(\chi_R u) \leq P(u).
\]

\begin{lemma}[Coercivity]\label{lem_coerc}
Let \( u \in H^1(\mathbb{R}^3) \). Suppose there exists \( \delta \in (0,1) \) such that
\begin{equation}\label{lemmacoercivity}
    P(u(t)) M(u(t))^{\sigma_c} \leq (1 - \delta) P(Q) M(Q)^{\sigma_c}.
\end{equation}
Then there exists a constant \( \delta' > 0 \) such that
\begin{equation}
    \int_{\mathbb{R}^3} |\nabla u|^2 \, dx - \frac{B}{2p} P(u) \geq \delta' P(u).
\end{equation}
In particular, for all \( R > 0 \),
\begin{equation}
    \int_{\mathbb{R}^3} |\nabla (\chi_R u)|^2 \, dx - \frac{B}{2p} P(\chi_R u) \geq \delta' P(\chi_R u).
\end{equation}
\end{lemma}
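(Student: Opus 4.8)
The plan is to derive the claimed coercivity from the Gagliardo–Nirenberg inequality with sharp constant $C_0$, exactly as in the $V=0$ theory, since the quantity to be controlled, $\int |\nabla u|^2 - \tfrac{B}{2p}P(u)$, does not see the potential. First I would rewrite the hypothesis \eqref{lemmacoercivity} in a form that controls $\|\nabla u\|_{L^2}$ by $\|Q\|$-type quantities. Recall from the variational identities that $\tfrac{2p}{B} = C_0 \|Q\|_{L^2}^{2(p-1)(1-s_c)}\|\nabla Q\|_{L^2}^{2(p-1)s_c}$ and that $P(u) \le C_0 \|u\|_{L^2}^{A}\|\nabla u\|_{L^2}^{B}$ with $A = 2(p-1)(1-s_c)$, $B = 2(p-1)s_c+2$. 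Combining these with \eqref{lemmacoercivity}, and using $M(u)^{\sigma_c} = \|u\|_{L^2}^{2\sigma_c}$ with $\sigma_c = \tfrac{1-s_c}{s_c}$, a direct computation gives a bound of the shape
\[
\|\nabla u\|_{L^2}^{2(p-1)s_c} M(u)^{(p-1)(1-s_c)/s_c \cdot s_c/(p-1)\cdots} \le (1-\delta)\,\|\nabla Q\|_{L^2}^{2(p-1)s_c}\,(\text{matching }M(Q)\text{ factors}),
\]
i.e. after cancelling the mass powers one obtains $\|\nabla u\|_{L^2}^{2} \le (1-\delta)^{1/((p-1)s_c)} \, c(u,Q) \,\|\nabla Q\|_{L^2}^2$ relative to the appropriate scale-invariant normalization; the upshot is that the scaling-invariant quantity $\|u\|_{L^2}^{\sigma_c}\|\nabla u\|_{L^2}$ stays strictly below $\|Q\|_{L^2}^{\sigma_c}\|\nabla Q\|_{L^2}$ by a fixed multiplicative gap.

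Next I would feed this subthreshold bound back into the Gagliardo–Nirenberg inequality to estimate $P(u)$: since $P(u) \le C_0\|u\|_{L^2}^A\|\nabla u\|_{L^2}^B = \big(C_0\|u\|_{L^2}^A\|\nabla u\|_{L^2}^{B-2}\big)\|\nabla u\|_{L^2}^2$, and the bracketed factor is, by the previous step and the identity $\tfrac{2p}{B}=C_0\|Q\|_{L^2}^A\|\nabla Q\|_{L^2}^{B-2}$, bounded by $(1-\delta)^{\kappa}\tfrac{2p}{B}$ for some $\kappa>0$ depending only on $p,s_c$. Hence $\tfrac{B}{2p}P(u) \le (1-\delta)^{\kappa}\|\nabla u\|_{L^2}^2$, which rearranges to
\[
\|\nabla u\|_{L^2}^2 - \tfrac{B}{2p}P(u) \ge \big(1-(1-\delta)^{\kappa}\big)\|\nabla u\|_{L^2}^2 \ge \big(1-(1-\delta)^{\kappa}\big)\tfrac{B}{2p}P(u),
\]
using $P(u)\le \tfrac{2p}{B}\|\nabla u\|_{L^2}^2$ once more in the last inequality. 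Setting $\delta' := \tfrac{B}{2p}\big(1-(1-\delta)^{\kappa}\big)>0$ gives the first displayed conclusion.

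For the second displayed inequality, the point is that $\chi_R u$ also satisfies the hypothesis of the lemma: indeed $\|\chi_R u\|_{L^2}\le \|u\|_{L^2}$ and $P(\chi_R u)\le P(u)$, so
\[
P(\chi_R u)M(\chi_R u)^{\sigma_c} \le P(u)M(u)^{\sigma_c} \le (1-\delta)P(Q)M(Q)^{\sigma_c},
\]
and therefore the first part applies verbatim with $u$ replaced by $\chi_R u$, yielding $\int|\nabla(\chi_R u)|^2 - \tfrac{B}{2p}P(\chi_R u)\ge \delta' P(\chi_R u)$ with the \emph{same} $\delta'$ (crucially uniform in $R$, since $\delta'$ depends only on $\delta$ and the fixed exponents).

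The main obstacle is bookkeeping the exponents correctly in the first step: one must verify that the mass powers $M(u)^{\sigma_c}$ on the left and $M(Q)^{\sigma_c}$ on the right combine with the Gagliardo–Nirenberg powers $A$ and $B$ so that all $\|u\|_{L^2}$ factors cancel, leaving a clean inequality purely between $\|\nabla u\|_{L^2}$ and $\|\nabla Q\|_{L^2}$ (this is precisely where $\sigma_c=\tfrac{1-s_c}{s_c}$ and $B-2=2(p-1)s_c$ are used). Once the scaling is checked the rest is a short algebraic manipulation; there is no new difficulty coming from $V$, since the potential never enters $P$, $M$, or the plain Dirichlet energy appearing in the statement.
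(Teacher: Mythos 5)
There is a genuine gap in your first step, and it is the crux of the whole argument. You claim that the hypothesis \eqref{lemmacoercivity}, $P(u)M(u)^{\sigma_c}\le(1-\delta)P(Q)M(Q)^{\sigma_c}$, forces the scale-invariant kinetic quantity $\|u\|_{L^2}^{\sigma_c}\|\nabla u\|_{L^2}$ to lie strictly below $\|Q\|_{L^2}^{\sigma_c}\|\nabla Q\|_{L^2}$. This implication is false: the Gagliardo--Nirenberg inequality only bounds $P(u)$ \emph{from above} by $C_0\|u\|_{L^2}^{A}\|\nabla u\|_{L^2}^{B}$, so smallness of $P(u)M(u)^{\sigma_c}$ puts no upper bound whatsoever on $\|\nabla u\|_{L^2}$ — the two inequalities point in the same direction and cannot be combined to "cancel the mass powers" as you propose. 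A concrete counterexample is $u_N(x)=e^{iNx_1}\phi(x)$ with $\phi$ a fixed bump scaled so that $P(\phi)M(\phi)^{\sigma_c}$ is below the threshold: $P(u_N)$ and $M(u_N)$ are independent of $N$ (they only see $|u_N|$), so \eqref{lemmacoercivity} holds for every $N$, while $\|\nabla u_N\|_{L^2}\to\infty$. The kinetic gap you invoke is exactly the content of Corollary \ref{maincorollary}, but there it is derived from the \emph{additional} energy hypotheses \eqref{1}--\eqref{2} together with a continuity argument; it is not available under the hypothesis of this lemma alone. Since your Step 2 rests on the bound $C_0\|u\|_{L^2}^{A}\|\nabla u\|_{L^2}^{B-2}\le(1-\delta)^{\kappa}\tfrac{2p}{B}$, which is equivalent to that kinetic gap, the proof does not go through as written.

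The repair is to keep $P(u)$ on both sides instead of trying to pass through $\|\nabla u\|_{L^2}$ versus $\|\nabla Q\|_{L^2}$: multiply the sharp Gagliardo--Nirenberg inequality by $P(u)^{\frac{B}{2}-1}$ and use $\tfrac{A}{2}=\sigma_c\bigl(\tfrac{B}{2}-1\bigr)$ to write
\begin{equation}
P(u)^{\frac{B}{2}}\le C_{\mathrm{op}}\bigl(P(u)M(u)^{\sigma_c}\bigr)^{\frac{B}{2}-1}\|\nabla u\|_{L^2}^{B},
\end{equation}
then compute $C_{\mathrm{op}}=\frac{(2p/B)^{B/2}}{(P(Q)M(Q)^{\sigma_c})^{B/2-1}}$ from the Pohozaev identity $\|\nabla Q\|_{L^2}^{2}=\tfrac{B}{2p}P(Q)$, so that the hypothesis applies directly to the ratio $\bigl(P(u)M(u)^{\sigma_c}/P(Q)M(Q)^{\sigma_c}\bigr)^{\frac{B}{2}-1}\le(1-\delta)^{\frac{B}{2}-1}$ and yields $\|\nabla u\|_{L^2}^{2}\ge\tfrac{B}{2p}(1-\delta)^{-\frac{B-2}{B}}P(u)$; subtracting $\tfrac{B}{2p}P(u)$ gives the conclusion with $\delta'=\tfrac{B}{2p}\bigl((1-\delta)^{-\frac{B-2}{B}}-1\bigr)$. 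This is the paper's argument, and it never needs any comparison between $\|\nabla u\|_{L^2}$ and $\|\nabla Q\|_{L^2}$. Your treatment of the cutoff part is fine: since $\|\chi_R u\|_{L^2}\le\|u\|_{L^2}$ and $P(\chi_R u)\le P(u)$, the hypothesis transfers to $\chi_R u$ and the first part applies with the same $\delta'$, uniformly in $R$, exactly as in the paper.
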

\begin{proof}
Since the ground state \( Q \) optimizes the sharp Gagliardo–Nirenberg inequality, we have
\begin{equation}\label{eq:GN}
P(u) \leq C_{\mathrm{op}} \|u\|_{L^2}^{A} \|\nabla u\|_{L^2}^{B}.
\end{equation}
Rewriting this, we obtain
\begin{align*}
1 &\leq C_{\mathrm{op}} P(u)^{-1} \|u\|_{L^2}^{A} \|\nabla u\|_{L^2}^{B}, \\
P(u)^{\frac{B}{2}} &\leq C_{\mathrm{op}} P(u)^{\frac{B}{2}-1} M(u)^{\frac{A}{2}} \|\nabla u\|_{L^2}^{B}.
\end{align*}
Note that \( \frac{B}{2} - 1 = \frac{A}{2\sigma_c} \), thus
\begin{equation}\label{eq:Pu}
P(u)^{\frac{B}{2}} \leq C_{\mathrm{op}} (P(u) M(u)^{\sigma_c})^{\frac{B}{2} - 1} \|\nabla u\|_{L^2}^{B}.
\end{equation}

Now, compute the optimal constant \( C_{\mathrm{op}} \) using \( \|\nabla Q\|_{L^2}^2 = \frac{B}{2p} P(Q) \),
\begin{align*}
C_{\mathrm{op}} &= \frac{P(Q)}{\|Q\|_{L^2}^A \|\nabla Q\|_{L^2}^B}
= \frac{P(Q)}{M(Q)^{\frac{A}{2}} \left( \frac{B}{2p} P(Q) \right)^{\frac{B}{2}}}= \frac{(2p)^{\frac{B}{2}}}{B^{\frac{B}{2}} (P(Q) M(Q)^{\sigma_c})^{\frac{B}{2}-1}}.
\end{align*}

Substituting into \eqref{eq:Pu}, we get
\begin{equation}\label{eq:key}
P(u)^{\frac{B}{2}} \leq \left( \frac{P(u) M(u)^{\sigma_c}}{P(Q) M(Q)^{\sigma_c}} \right)^{\frac{B}{2} - 1} \left( \frac{2p}{B} \|\nabla u\|_{L^2}^2 \right)^{\frac{B}{2}}.
\end{equation}

Under the assumption \( P(u) M(u)^{\sigma_c} \leq (1-\delta) P(Q) M(Q)^{\sigma_c} \), inequality \eqref{eq:key} implies
\[
P(u)^{\frac{B}{2}} \leq (1-\delta)^{\frac{B}{2} - 1} \left( \frac{2p}{B} \|\nabla u\|_{L^2}^2 \right)^{\frac{B}{2}},
\]
so that
\begin{equation}\label{eq:coercivity}
\|\nabla u\|_{L^2}^2 \geq \frac{B}{2p} \cdot \frac{1}{(1-\delta)^{\frac{B-2}{B}}} P(u).
\end{equation}

Subtracting \( \frac{B}{2p}P(u) \) from both sides in \eqref{eq:coercivity}, we obtain
\begin{align*}
\|\nabla u\|_{L^2}^2 - \frac{B}{2p} P(u)
&\geq \frac{B}{2p} \left( \frac{1}{(1-\delta)^{\frac{B-2}{B}}} - 1 \right) P(u) \\
&= \delta' P(u),
\end{align*}
where $\delta' := \frac{B}{2p} \left( \frac{1}{(1-\delta)^{\frac{B-2}{B}}} - 1 \right)$.

\ The second part of the lemma follows immediately, since \( \|\chi_R u\|_{L^2} \leq \|u\|_{L^2} \) and \( P(\chi_R u) \leq P(u) \), concluding the proof.

\end{proof}

\ To conclude this section, we apply variational analysis to derive a consequence of our main result.
 
\begin{proof}[\bf Proof of Corollary \eqref{maincorollary}]
We now prove a consequence of Theorem~\ref{mainresulttheorem}: under the standard mass and energy conditions, scattering follows.

\ First, we show that \( u \) is global. Then we prove
\[
\sup_{t \in \mathbb{R}} P(u(t))M(u(t))^{\sigma_c} < P(Q)M(Q)^{\sigma_c},
\]
which ensures scattering by Theorem~\ref{mainresulttheorem}.

\ From assumption \eqref{1}, we have
\[
M(u)^{\sigma_c}E(u_0) < (1 - \delta) M(Q)^{\sigma_c}E_0(Q), \quad \delta > 0.
\]
Using this and the sharp Gagliardo–Nirenberg inequality:
\begin{align*}
M(u)^{\sigma_c}E(u) 
&= \|u\|_{L^2}^{2\sigma_c}\left(\tfrac{1}{2}\|\Lambda u(t)\|_{L^2}^2 - \tfrac{1}{2p}P(u(t))\right) \\
&\geq \tfrac{1}{2}(\|\Lambda u(t)\|\|u(t)\|^{\sigma_c})^2 - \tfrac{C_{op}}{2p}\|u(t)\|^{A+2\sigma_c} \|\Lambda u(t)\|^B \\
&= \tfrac{1}{2}(\|\Lambda u(t)\|\|u(t)\|^{\sigma_c})^2 - \tfrac{C_{op}}{2p}(\|u(t)\|^{\sigma_c} \|\Lambda u(t)\|)^B,
\end{align*}
where we used \( A + 2\sigma_c = B\sigma_c \). Define \( g(x) := \tfrac{1}{2}x^2 - \tfrac{C_{op}}{2p}x^B \). Then,
\[
(1 - \delta) M(Q)^{\sigma_c}E_0(Q) > g(\|\Lambda u(t)\|\|u(t)\|^{\sigma_c}).
\]
Moreover, 
\[
M(Q)^{\sigma_c}E_0(Q) = g(\|\nabla Q\|\|Q\|^{\sigma_c}).
\]
Hence,
\[
g(\|\Lambda u(t)\|\|u(t)\|^{\sigma_c}) < g(\|\nabla Q\|\|Q\|^{\sigma_c}).
\]
By continuity of \( g \) and assumption \eqref{2}, we get
\[
\|\Lambda u(t)\|\|u(t)\|^{\sigma_c} < \|\nabla Q\|\|Q\|^{\sigma_c},
\]
implying \( u \) is global. This completes the first part.

\ On other hand,
\[
1 - \delta > \frac{\frac{1}{2}(\|\Lambda u(t)\|\|u(t)\|^{\sigma_c})^2}{E_0(Q)M(Q)^{\sigma_c}} - \frac{C_{op}}{2p} \frac{(\|\Lambda u(t)\|\|u(t)\|^{\sigma_c})^B}{E_0(Q)M(Q)^{\sigma_c}}.
\]
Using Pohozaev identity:
\[
E_0(Q)M(Q)^{\sigma_c} = \frac{B-2}{2B}(\|\nabla Q\|\|Q\|^{\sigma_c})^2\quad C_{op} = \frac{P(Q)}{\|Q\|^A\|\nabla Q\|^B} = \frac{2p}{B\|Q\|^A\|\nabla Q\|^{B-2}},
\]
we obtain
\[
1 - \delta > \frac{B}{B-2}\left(\frac{\|\Lambda u\|\|u\|^{\sigma_c}}{\|\nabla Q\|\|Q\|^{\sigma_c}}\right)^2 - \frac{2}{B-2} \left(\frac{\|\Lambda u\|\|u\|^{\sigma_c}}{\|\nabla Q\|\|Q\|^{\sigma_c}}\right)^B.
\]
Set \( y(t) := \frac{\|\Lambda u\|\|u\|^{\sigma_c}}{\|\nabla Q\|\|Q\|^{\sigma_c}} \). Since \( y(t) < 1 \), define
\[
f(y) = \frac{B}{B-2}y^2 - \frac{2}{B-2}y^B,
\]
with \( f \) increasing, \( f(0) = 0 \), and \( f(1) = 1 \). Hence, there exists \( \rho > 0 \) such that
\begin{equation}\label{eq:critical_gap}
\|\Lambda u\|^2\|u\|^{\sigma_c} < (1 - \rho)\|\nabla Q\|\|Q\|^{\sigma_c}.
\end{equation}

Next, we use the inequality
\begin{equation}\label{eq:nonlinear_prod}
M(u)^{\sigma_c} P(u) \leq C_{op} \|u\|^{A + 2\sigma_c} \|\nabla u\|^B = C_{op} (\|u\|^{\sigma_c} \|\nabla u\|)^B.
\end{equation}
From
\[
C_{op} = \frac{P(Q)}{\|Q\|^A \|\nabla Q\|^B} = \frac{P(Q) M(Q)^{\sigma_c}}{\|Q\|^{A + 2\sigma_c} \|\nabla Q\|^B},
\]
and since \( B\sigma_c = A + 2\sigma_c \), we find
\[
\frac{P(u) M(u)^{\sigma_c}}{P(Q) M(Q)^{\sigma_c}} \leq \left( \frac{\|\nabla u\| \|u\|^{\sigma_c}}{\|\nabla Q\| \|Q\|^{\sigma_c}} \right)^B.
\]
By \eqref{eq:critical_gap},
\[
P(u) M(u)^{\sigma_c} < (1 - \rho)^B P(Q) M(Q)^{\sigma_c}.
\]
Therefore, \( u \) scatters in \( H^1(\mathbb{R}^3) \).

\end{proof}

\section{Proof of Theorem \ref{mainresulttheorem}}

\ In this section, we establish a Virial–Morawetz estimate that yields the mass evacuation condition. In particular, we conclude that \( u \) is global, uniformly bounded in \( H^1(\mathbb{R}^3) \), and satisfies Proposition~\ref{Scattering}, thus completing the proof of our main result.

\ Let $R \gg 1$ to be chosen later. We take $a(x)$ to be a radial function satisfying
$$
a(x)= \begin{cases}|x|^2 ; & |x| \leq R \\ 3R|x| ; & |x|>2 R,\end{cases}
$$
and when $R<|x| \leq 2 R$, there holds
$$
\partial_r a \geq 0,\qquad \partial_{r r} a \geq 0 \quad \text { and } \quad\left|\partial^\alpha a\right| \lesssim R|x|^{-|\alpha|+1} .
$$

Here $\partial_r$ denotes the radial derivative. Under these conditions, the matrix $\left(a_{j k}\right)$ is non-negative. It is easy to verify that
$$
\begin{cases}a_{j k}=2 \delta_{j k}, \quad \Delta a=6, \quad \Delta^{2} a=0, & |x| \leq R \\ a_{j k}=\frac{3 R}{|x|}\left[\delta_{j k}-\frac{x_j x_k}{|x|^2}\right], \quad \Delta a=\frac{6R}{|x|}, \quad \Delta^{2} a=0, & |x|>2 R\end{cases}
$$
\subsection{Virial-Morawetz estimate}

\ We first derive the Virial–Morawetz identity. The proof is standard and will be omitted.

\begin{lemma}\label{morawetzidentity}{(Virial-Morawetz identity)} Let $a:\mathbb{R}^{3}\rightarrow\mathbb{R}$ a smooth weight. If $\|\nabla a\|_{L^{\infty}}$ is bounded, define
\begin{equation}\label{identity}
M_{a}(t) = 2Im\int_{\mathbb{R}^{3}}\bar{u}\nabla{u}\cdot \nabla{a}\ dx
\end{equation}
Then, we have
\begin{subequations}\label{eq:parte4}
\begin{align}
\frac{d}{dt} M_a(t) &= \int_{\mathbb{R}^3} (-\Delta^2 a) |u|^2\, dx 
+ 4 \int_{\mathbb{R}^3} a_{jk} \operatorname{Re}(\partial_j \bar{u} \partial_k u)\, dx 
\label{eq:parte4a} \\
&\quad 
- \left(2 - \frac{4}{p} \right) \int_{\mathbb{R}^3} \Delta a\, |u|^p (I_\gamma * |x|^{-b} |u|^p)\, |x|^{-b} dx 
\label{eq:parte4b} \\
&\quad 
+ \frac{4b}{p} \int_{\mathbb{R}^3} \nabla a \cdot \frac{x}{|x|^{2+b}} (I_\gamma * |x|^{-b} |u|^p)\, |u|^p\, dx 
\label{eq:parte4c} \\
&\quad 
- 2 \int_{\mathbb{R}^3} |u|^2\, \nabla V \cdot \nabla a(x)\, dx 
\label{eq:parte4d} \\
&\quad 
- M 
\iint_{\mathbb{R}^3 \times \mathbb{R}^3}
(\nabla a(x) - \nabla a(y)) \cdot \frac{x - y}{|x - y|^{5 - \gamma}} 
|x|^{-b} |u(x)|^p |y|^{-b} |u(y)|^p\, dy\, dx,
\label{eq:parte4e}
\end{align}
where $M=\frac{2\mathcal{K}(3-\gamma)}{p}$.
\end{subequations}

\end{lemma}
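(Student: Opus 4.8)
The plan is to differentiate the Morawetz action $M_a(t)$ from \eqref{identity} in time, substitute the equation \eqref{IGHP}, and reorganize the resulting terms according to their origin: the Laplacian, the potential $V$, and the nonlocal nonlinearity $N_\gamma$. Writing \eqref{IGHP} as $u_t=-iW$ with $W:=-\Delta u+Vu-N_\gamma(u)$, a direct computation using the equation gives
\[
\frac{d}{dt}M_a(t)=2\int_{\mathbb{R}^3}\operatorname{Re}\!\big(\overline{W}\,\nabla u-\bar u\,\nabla W\big)\cdot\nabla a\,dx .
\]
Since $H^1$ solutions are not a priori classical, I would first establish this and all the identities below for smooth, fast-decaying data and then pass to the limit using the local well-posedness theory (Proposition~\ref{prop:local}) and continuity of the flow in $H^1$; the hypotheses $\|\nabla a\|_{L^\infty}<\infty$ and the boundedness of $\Delta a$, $\Delta^2 a$ (with $\Delta^2 a=0$ on the two regions $|x|\le R$ and $|x|>2R$) guarantee that all the terms appearing below are well defined.

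Next I would split $W$ into its three pieces. The $-\Delta u$ piece is the classical free virial computation: two integrations by parts produce exactly $\int(-\Delta^2 a)|u|^2\,dx+4\int a_{jk}\operatorname{Re}(\partial_j\bar u\,\partial_k u)\,dx$, i.e. \eqref{eq:parte4a}; this can be cross-checked against the textbook identity $\tfrac{d^2}{dt^2}\int|x|^2|u|^2=8\int|\nabla u|^2$ in the special case $a(x)=|x|^2$. The $Vu$ piece is immediate: since $\operatorname{Re}\big(\overline{Vu}\,\nabla u-\bar u\,\nabla(Vu)\big)=-|u|^2\nabla V$ (the $V\bar u\,\nabla u$ contributions cancel), it yields precisely \eqref{eq:parte4d}.

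The heart of the argument is the nonlinear piece $\mathcal{T}:=2\int\operatorname{Re}\big(\bar u\,\nabla N_\gamma(u)-\overline{N_\gamma(u)}\,\nabla u\big)\cdot\nabla a\,dx$. Using $\operatorname{Re}(\bar u\,\nabla u)=\tfrac12\nabla|u|^2$ one gets $\operatorname{Re}\big(\overline{N_\gamma(u)}\,\nabla u\big)=\tfrac1p\,(I_\gamma*|x|^{-b}|u|^p)\,|x|^{-b}\,\nabla|u|^p$, and integrating by parts in the term $\int\bar u\,\nabla N_\gamma\cdot\nabla a$ collapses $\mathcal{T}$ to
\[
\mathcal{T}=-\tfrac{4}{p}\int(I_\gamma*|x|^{-b}|u|^p)\,|x|^{-b}\,\nabla|u|^p\cdot\nabla a\,dx-2\int\Delta a\,(I_\gamma*|x|^{-b}|u|^p)\,|x|^{-b}|u|^p\,dx .
\]
A further integration by parts, moving $\nabla$ off $|u|^p$, splits the first integral into three contributions: the one where $\nabla$ hits $\nabla a$, an extra $\Delta a$-term that combines with the one above to give the coefficient $-(2-\tfrac4p)$ of \eqref{eq:parte4b}; the one where $\nabla$ hits the weight, via $\nabla|x|^{-b}=-b\,x|x|^{-b-2}$, giving \eqref{eq:parte4c}; and the one where $\nabla$ hits the Riesz convolution, via $\nabla_x(I_\gamma*g)(x)=-\mathcal{K}(3-\gamma)\int g(y)\tfrac{x-y}{|x-y|^{5-\gamma}}\,dy$ with $g=|x|^{-b}|u|^p$. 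The last contribution is a double integral against the antisymmetric kernel $\tfrac{x-y}{|x-y|^{5-\gamma}}$; symmetrizing in $x\leftrightarrow y$ turns $\nabla a(x)$ into $\tfrac12(\nabla a(x)-\nabla a(y))$ and yields \eqref{eq:parte4e} with $M=\tfrac{2\mathcal{K}(3-\gamma)}{p}$.

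I expect the only genuine difficulty to be the rigorous justification rather than the algebra: one must track signs carefully through the weighted and nonlocal integrations by parts, and, more importantly, justify every integration by parts in the presence of the singular factor $|x|^{-b}$ at the origin and of the $|x-y|^{-(5-\gamma)}$ kernel on the diagonal and at infinity. For this I would invoke the Hardy--Littlewood--Sobolev inequality together with bounds of the type $\||x|^{-b}|u|^p\|_{L^{\rho}}\lesssim\|u\|_{H^1}^p$ for suitable $\rho$ (exactly as in Lemma~\ref{L:NL} and the Sobolev/Hardy inequalities recorded in Section~\ref{sec2}), valid in the stated range $0<b<\tfrac{1+\gamma}{2}$; combined with the structural decay $|\partial^\alpha a|\lesssim R|x|^{-|\alpha|+1}$, these make all integrands absolutely integrable, so the formal computation is legitimate and \eqref{eq:parte4} follows.
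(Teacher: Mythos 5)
The paper itself omits the proof of Lemma~\ref{morawetzidentity} as ``standard'', and your proposal is exactly the standard derivation it alludes to: differentiate $M_a$, use $u_t=-iW$, treat the $-\Delta u$, $Vu$ and $N_\gamma$ pieces separately, and for the nonlinear piece integrate by parts so that $\nabla$ falls on $\nabla a$, on $|x|^{-b}$, and on the Riesz convolution, symmetrizing the antisymmetric kernel to produce \eqref{eq:parte4e} with $M=\tfrac{2\mathcal K(3-\gamma)}{p}$. All of these structural steps are correct, and the approximation/justification remarks are appropriate.

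One concrete point you should not assert away: carrying out your own computation, the term where $\nabla$ hits the weight comes out as
\begin{equation}
-\frac{4b}{p}\int_{\mathbb{R}^3}\nabla a\cdot\frac{x}{|x|^{2+b}}\,(I_\gamma*|x|^{-b}|u|^p)\,|u|^p\,dx,
\end{equation}
i.e.\ with the \emph{opposite} sign to \eqref{eq:parte4c} as printed, since $\nabla(|x|^{-b})=-b\,x|x|^{-b-2}$ and this factor enters with a plus after the single integration by parts. Your sketch says this contribution ``gives \eqref{eq:parte4c}'', which silently matches a sign that your algebra does not produce. A clean consistency check with $a(x)=|x|^2$ settles it: adding the coefficients of $P(u)$ coming from \eqref{eq:parte4b}, the weight term, and \eqref{eq:parte4e} must give $-\tfrac{4B}{p}P(u)$ with $B=2(p-1)s_c+2=3p-3+2b-\gamma$, and this holds precisely with the minus sign above (with the plus sign one gets $-12+\tfrac{12+8b+4\gamma}{p}$, which is not scaling-consistent). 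So your derivation is the correct one and the sign in the stated \eqref{eq:parte4c} (like the formula $B=3p-3+\gamma-2b$ elsewhere in the paper) appears to be a typo; you should state this explicitly rather than claim agreement with the printed formula.
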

\begin{remark}
Define the domain
\[
\Omega := \{x \in \mathbb{R}^3 : |x| \leq R\} \times \{y \in \mathbb{R}^3 : |y| \leq R\} \subset \mathbb{R}^3 \times \mathbb{R}^3, 
\quad \Omega^c := \mathbb{R}^3 \times \mathbb{R}^3 \setminus \Omega.
\]
Note that on \( \Omega \), we have \( \nabla a(x) - \nabla a(y) = 2(x - y) \), and we can decompose the nonlocal term from ~\eqref{eq:parte4e} as
\begin{align*}
\frac{p}{2\mathcal{K}(3-\gamma)}\eqref{eq:parte4e} 
&= \iint_{\Omega} \frac{2 |x|^{-b} |u(x)|^p |y|^{-b} |u(y)|^p}{|x - y|^{3 - \gamma}}\, dy\, dx \\
&\quad + \iint_{\Omega^c} 
(\nabla a(x) - \nabla a(y)) \cdot \frac{x - y}{|x - y|^{5 - \gamma}} 
|x|^{-b} |u(x)|^p |y|^{-b} |u(y)|^p\, dy\, dx\\
&:=I_1+I_2.
\end{align*}

For the second integral, from the definition of \( a(x) \), we have 
\[
|\partial^\beta a(x)| \lesssim R^{2 - |\beta|}, \quad \text{when } |x - y| < R,
\]
and
\[
\left| (\nabla a(x) - \nabla a(y)) \cdot \frac{x - y}{|x - y|^2} \right| \lesssim \| D^2 a \|_{L^\infty}.
\]
Similarly, when \( |x - y| \geq R \), it holds that
\[
\left| (\nabla a(x) - \nabla a(y)) \cdot \frac{x - y}{|x - y|^2} \right| 
\lesssim \frac{\| D^2 a \|_{L^\infty}}{R}.
\]

Moreover, by symmetry and suitable estimates,
\[
I_2
\leq \frac{2}{\mathcal{K}} \int_{|x| > 2R} 
(I_\gamma * |x|^{-b} |u|^p) |x|^{-b} |u|^p\, dx.
\]
\end{remark}

\ We now establish the next proposition.

\begin{proposition}\label{prop5.3} Let $0<b<\frac{1+\gamma}{2}$ and $\frac{5-2b+\gamma}{3}< p<\frac{5-4b+2\gamma}{2}$. Let $V: \mathbb{R}^3 \rightarrow \mathbb{R}$ satisfy \eqref{V-potential1}, $V\geq0$, $x\cdot\nabla V\leq 0$ and $x\cdot\nabla V\in L^{r}$, where $r\in[\frac{3}{2},\infty)$.  Let $u$ be a $H^1$-solution of \eqref{IGHP}
 satisfying \eqref{newcondition}. Then for any $T>0$ and $R(\delta, M(u),Q)$ sufficiently
large
\begin{equation}\label{pro5.3a}
\frac{1}{T}\int_{0}^{T}\int_{|x|<R}P(u(t))dxdt\lesssim \frac{R}{T}+\mathcal{O}\left(\frac{1}{R^{2}}+\frac{1}{R^{b}}+\mathcal{O}_R\right).
\end{equation}
In particular, there exists a sequence of times $t_{n},R_{n}\rightarrow\infty$ such that 
\begin{equation}\label{prop5.3b}
\lim_{n\rightarrow\infty}\int_{|x|<R_{n}}P(\chi_{R}u(t_n))dx = 0.
\end{equation}
\end{proposition}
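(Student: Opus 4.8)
\emph{Strategy.} The plan is to run a truncated virial--Morawetz argument with the weight $a$ fixed above, turn the resulting differential inequality into a space--time bound on $P(\chi_R u)$, and extract the desired sequence by averaging. As a preliminary, \eqref{newcondition} already forces $u$ to be global with uniform $H^1$ control: $M(u)$ is conserved, so $P(u(t))=M(u_0)^{-\sigma_c}P(u(t))M(u(t))^{\sigma_c}$ stays bounded, and since $V\ge0$, conservation of the energy \eqref{energy} gives $\|\nabla u(t)\|_{L^2}^2\le\|\Lambda u(t)\|_{L^2}^2=2E(u_0)+\tfrac2pP(u(t))\lesssim1$; hence $T^*=\infty$ and $\sup_t\|u(t)\|_{H^1}<\infty$. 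Also \eqref{newcondition} yields $\delta\in(0,1)$ with $P(u(t))M(u(t))^{\sigma_c}\le(1-\delta)P(Q)M(Q)^{\sigma_c}$ for all $t$; since $\|\chi_R u\|_{L^2}\le\|u\|_{L^2}$ and $P(\chi_R u)\le P(u)$, the same bound holds for $\chi_R u(t)$, so Lemma~\ref{lem_coerc} applies to $\chi_R u(t)$ with a constant $\delta'>0$ independent of $t$ and $R$.

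\emph{Bulk analysis.} Differentiating $M_a(t)$ and inserting Lemma~\ref{morawetzidentity}, I split each term over $\{|x|\le R\}$ and $\{|x|>R\}$, and the nonlocal term over $\Omega$ and $\Omega^c$. On $\{|x|\le R\}$ one has $a(x)=|x|^2$, so $a_{jk}=2\delta_{jk}$, $\Delta a=6$, $\Delta^2 a=0$, $\nabla a=2x$, and on $\Omega$, $\nabla a(x)-\nabla a(y)=2(x-y)$. Feeding these into the explicit constants of Lemma~\ref{morawetzidentity} and using the ground-state identity $\|\nabla Q\|_{L^2}^2=\tfrac{B}{2p}P(Q)$, the kinetic term \eqref{eq:parte4a}, the nonlinear terms \eqref{eq:parte4b}--\eqref{eq:parte4c}, and the $\Omega$-part of \eqref{eq:parte4e} combine on the bulk into $8\big(\int_{|x|\le R}|\nabla u|^2\,dx-\tfrac{B}{2p}P_{<R}(u)\big)$, with $P_{<R}(u):=\int_{|x|<R}(I_\gamma*|x|^{-b}|u|^p)|x|^{-b}|u|^p\,dx$. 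The potential term \eqref{eq:parte4d} has a favorable sign: $a$ is radial with $\partial_r a\ge0$, hence $\nabla V\cdot\nabla a=(\partial_r a)\tfrac{x\cdot\nabla V}{|x|}\le0$ by $x\cdot\nabla V\le0$, so $-2\int|u|^2\nabla V\cdot\nabla a\,dx\ge0$ (the hypothesis $x\cdot\nabla V\in L^r$ makes this term and the identity meaningful for $H^1$ data); the $\Delta^2 a$ term is supported on $\{R\le|x|\le2R\}$ with $|\Delta^2 a|\lesssim R^{-2}$. Passing to the cutoff, $\int_{|x|\le R}|\nabla u|^2\ge\|\nabla(\chi_R u)\|_{L^2}^2+O(R^{-2})$ by integrating by parts in $|\nabla(\chi_R u)|^2$, while $P(\chi_R u)\le P_{<R}(u)$ with the difference a tail supported in $\{|x|>R/2\}$; then Lemma~\ref{lem_coerc} gives $8\|\nabla(\chi_R u)\|_{L^2}^2-\tfrac{4B}{p}P(\chi_R u)\ge8\delta'P(\chi_R u)$.

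\emph{Exterior analysis and differential inequality.} The kinetic term on $\{|x|>R\}$ is $\ge0$ since $(a_{jk})\ge0$ there, and is discarded. The remaining pieces --- \eqref{eq:parte4b} and \eqref{eq:parte4c} on $\{|x|>R\}$, the $\Omega^c$-part of \eqref{eq:parte4e} (dominated in the preceding remark by $\tfrac2{\mathcal K}\int_{|x|>2R}(I_\gamma*|x|^{-b}|u|^p)|x|^{-b}|u|^p\,dx$), and the tails from the cutoff --- are all controlled by $\int_{|x|>R/2}(I_\gamma*|x|^{-b}|u|^p)|x|^{-b}|u|^p\,dx$, with an extra gain $R^{-b}$ for \eqref{eq:parte4c} because $|\nabla a\cdot x|\,|x|^{-2-b}\lesssim R|x|^{-1-b}\lesssim R^{-b}$ on $\{|x|>R\}$. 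Using $|x|^{-b}\le(R/2)^{-b}$ on $\{|x|>R/2\}$, Hardy--Littlewood--Sobolev, and Hölder --- and noting that for $\tfrac{5-2b+\gamma}{3}<p<\tfrac{5-4b+2\gamma}{2}$ all resulting Lebesgue exponents lie in $[2,6]$, so that $\sup_t\|u(t)\|_{H^1}<\infty$ suffices --- this tail is $\lesssim R^{-b}$. This exterior estimate is the step I expect to be the main obstacle: without radial symmetry there is no pointwise radial-Sobolev decay to exploit, so the far-field smallness must come entirely from the weight $|x|^{-b}$, which simultaneously must not push any exponent out of the $H^1$-admissible range. Collecting everything, $\tfrac{d}{dt}M_a(t)\ge8\delta'P(\chi_R u(t))-C\big(R^{-2}+R^{-b}+\mathcal O_R\big)$, where $\mathcal O_R\to0$ as $R\to\infty$.

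\emph{Conclusion.} Integrating on $[0,T]$ and using $|M_a(t)|\le2\|\nabla a\|_{L^\infty}\|u(t)\|_{L^2}\|\nabla u(t)\|_{L^2}\lesssim R$ (since $\|\nabla a\|_{L^\infty}\lesssim R$ and $\sup_t\|u(t)\|_{H^1}<\infty$) yields $\tfrac1T\int_0^T P(\chi_R u(t))\,dt\lesssim\tfrac RT+R^{-2}+R^{-b}+\mathcal O_R$, which is \eqref{pro5.3a}. Finally, pick $R_n\to\infty$ and $T_n\to\infty$ with $R_n/T_n\to0$ (e.g. $R_n=n$, $T_n=n^3$), so the right-hand side with $R=R_n$, $T=T_n$ tends to $0$; since $t\mapsto P(\chi_{R_n}u(t))$ is continuous, there is $t_n\in[T_n/2,T_n]$, hence $t_n\to\infty$, with $P(\chi_{R_n}u(t_n))\lesssim\tfrac1{T_n}\int_0^{T_n}P(\chi_{R_n}u(t))\,dt\to0$, which gives \eqref{prop5.3b}.
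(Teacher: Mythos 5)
Your proposal is correct and follows essentially the same route as the paper: the same truncated virial--Morawetz weight, the same bulk/annulus/exterior splitting combined into $8\bigl(\int_{|x|\le R}|\nabla u|^2 - \tfrac{B}{2p}P\bigr)$, the coercivity Lemma~\ref{lem_coerc} applied to $\chi_R u$, the HLS tail bound of size $R^{-b}$, the sign/$L^r$ treatment of the potential terms, the bound $|M_a|\lesssim R$, and averaging with $R_n\sim T_n^{1/3}$. Your handling of the potential term purely by the sign $\partial_r a\ge 0$, $x\cdot\nabla V\le 0$, and your extraction of $t_n\in[T_n/2,T_n]$ (guaranteeing $t_n\to\infty$) are minor refinements of the paper's argument, not a different method.
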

\begin{proof}
Let $\delta'=\delta'(u_{0},Q)$ and $R_{0}=R_{0}(\delta,u_{0})$. Let $a$ as a lemma \eqref{morawetzidentity}. By Cauchy-Schwartz and Lemma \eqref{morawetzidentity}, we obtain\footnote{Note that, by combining condition \eqref{newcondition} with the conservation of energy, we deduce that $\|\Lambda u(t)\|_{L^{2}}$ is bounded.
} 
\begin{eqnarray*}
\sup_{t\in\mathbb{R}}|M_{a}(t)|= \sup_{t\in\mathbb{R}}|2Im\int_{\mathbb{R}^{3}}\bar{u}\nabla{u}\cdot \nabla{a}\ dx| &\leq&\|\nabla a(x)\|_{L^{\infty}}\|u(t)\|_{L^{2}}\|\Lambda u(t)\|_{L^{2}}\lesssim R
\end{eqnarray*}
For all $t\in\mathbb{R}$, by Virial-Morawetz identity we have
\begin{subequations}
\begin{align}
\frac{d}{d t} M_{a}(t)&=8\left(\int_{|x|<R}|\nabla u(t)|^2 d x-\frac{B}{2p} \int_{|x|<R}\left(I_\gamma *|x|^{-b}|u|^p\right)|u|^p|x|^{-b} d x\right)\label{5.19}\\
&\quad-2\int_{|x|<R}|u|^{2}\nabla V \cdot\nabla a (x)dx\label{5.20}\\
&\quad-\frac{4\mathcal{K}(3-\gamma)}{p}\int_{|x|<R}\int_{|x|<R}\frac{|y|^{-b}|u(y)|^p|x|^{-b}|u(x)|^p}{|x-y|^{3-\gamma}} d y d x\label{5.21}\\
&\quad+4\int_{R<|x|<2 R}a_{j k} \operatorname{Re}\left(\partial_j \bar{u} \partial_k u\right)+\mathcal{O}\left(\frac{R}{|x|^3}|u|^2\right) d x\label{5.22}\\
&\quad+\int_{|x|>2 R}\frac{9 R}{|x|}|\not\nabla u|^2 d x +\mathcal{O}\left(\int_{|x|>2R}(I_{\gamma}*|x|^{-b}|u|^{p})|x|^{-b}|u|^{p}dx\right)\label{5.23}\\
&\quad-2\int_{|x|>2R}|u|^{2}\nabla V \cdot\nabla a (x)dx\label{5.24}\\
&\quad-2\int_{R<|x|<2R}|u|^{2}\nabla V \cdot\nabla a(x)dx\label{5.25},
\end{align}
\end{subequations}
where $\not\nabla=\nabla-\frac{x}{|x|^2}\,(x \cdot \nabla)$ denotes the angular part of the derivative. 
In the first term of \eqref{5.23}, the angular derivative term is nonnegative. 
In \eqref{5.22}, the first term is also nonnegative, and the second one is bounded by $R^{-2}$. 
For the second term of \eqref{5.23}, by the Hardy--Littlewood--Sobolev inequality we get
$$
\mathcal{O}\!\left(\int_{|x|>2R}(I_{\gamma}*|x|^{-b}|u|^{p})\,|x|^{-b}|u|^{p}\,dx\right)
\lesssim R^{-b}\,\|\,|x|^{-b}\,\|_{L^{r_1}(A)}\,\|u\|_{L^r}^{2p}
\lesssim R^{-b},
$$
where the exponents satisfy
$$
1+\frac{\gamma}{3}=\frac{1}{r_1}+\frac{1}{r}.
$$
Since $p \geq 2$, for each $A \in \{B_1(0), B_1^c(0)\}$ there exists $r \in [2,6]$ such that $|x|^{-b}\in L^{r_1}(A)$.

In \eqref{5.24}, the term follows from the fact that $x \cdot \nabla V \leq 0$, and is therefore nonnegative. 
Using that $\left|\nabla a \cdot \nabla V\right|\lesssim 2|x \cdot \nabla V|$ and $x \cdot \nabla V \in L^{r}$, with $r\in[\tfrac{3}{2},\infty)$, 
we know from Sobolev embedding and interpolation that
\begin{equation}
\|u\|_{L^{q}(\mathbb{R}^{3})}\leq \|u\|_{L^{3}(\mathbb{R}^{3})}^{\sigma}\,
\|u\|_{L^{6}(\mathbb{R}^{3})}^{1-\sigma}
\leq \|u\|_{H^{1}(\mathbb{R}^{3})}, 
\qquad 3\leq q\leq 6.
\end{equation}
For all $r\geq \tfrac{3}{2}$, we note that $2\leq 2r'\leq 6$. Therefore,
\begin{eqnarray*}
\left|\int_{|x|>2R} |u(t)|^{2}\,\nabla V \cdot \nabla a \, dx\right|
&\lesssim& \int_{|x|>2R} |u(t)|^{2}\,|\nabla a \cdot \nabla V|\, dx \\
&\lesssim& \int_{\mathbb{R}^{3}} |u(t)|^2\,|x \cdot \nabla V|\, dx \\
&\lesssim& \|x \cdot \nabla V\|_{L^{r}(|x|>2R)} \,\|u(t)\|_{L^{2r'}}^2 \\
&\lesssim& \|x \cdot \nabla V\|_{L^{r}(|x|>2R)} \,\|u(t)\|_{H^{1}(\mathbb{R}^{3})} \\
&=& \mathcal{O}_R.
\end{eqnarray*}

In the \eqref{5.25}, using the same idea above and know that $\left|\partial^\alpha a\right| \lesssim R|x|^{-|\alpha|+1}$, we have that 
\begin{equation}
\left|\int_{R<|x|<2R}\nabla a\nabla V |u(t)|^{2} dx\right|\lesssim\mathcal{O}_{R}.
\end{equation}

The term in \eqref{5.20} is analogous to the terms in \eqref{5.24} and \eqref{5.25}.  
Thus, we have 
\begin{eqnarray*}
\frac{d}{dt} M_{a}(t)
&\geq& 8\left(\int_{|x|<R}|\nabla u(t)|^2 \, dx
-\frac{B}{2p}\int_{|x|<R}\left(I_\gamma * |x|^{-b}|u|^p\right)|u|^p|x|^{-b}\, dx\right) \\
&+& \mathcal{O}\!\left(\frac{1}{R^{2}}+\frac{1}{R^{b}}+\mathcal{O}_R\right).
\end{eqnarray*}

Let $\chi_R$ be as in Lemma~\eqref{lem_coerc}. We compute
\[
\begin{aligned}
\int\left|\nabla(\chi_R u(t))\right|^2 dx 
&= \int \chi_R^2 |\nabla u(t)|^2 dx - \int \chi_R \Delta(\chi_R)|u(t)|^2 dx \\
&= \int_{|x|\leq R}|\nabla u(t)|^2 dx 
   - \int_{R\leq |x|\leq 2R}(1-\chi_R^2)|\nabla u(t)|^2 dx 
   - \int \chi_R \Delta(\chi_R)|u(t)|^2 dx.
\end{aligned}
\]

Since $0\leq \chi_R \leq 1$ and $\|\Delta(\chi_R)\|_{L^{\infty}} \leq R^{-2}$, it follows that
\begin{eqnarray*}
\eqref{5.19}
&=& 8\Bigg(\int_{\mathbb{R}^3}\left|\nabla(\chi_R u)\right|^2 dx
- \frac{B}{2p}\int_{\mathbb{R}^3}\left(I_\gamma * |x|^{-b}|\chi_R u|^p\right)|x|^{-b}|\chi_R u|^p dx\Bigg) \\
&& \quad + \int_{\mathbb{R}^3} \mathcal{O}\!\left(\frac{|u|^2}{R^2}\right) dx + \frac{1}{R^{b}}.
\end{eqnarray*}

By Lemma~\eqref{lem_coerc}, we have
\[
\int_{\mathbb{R}^3}|\nabla (\chi_{R}u)|^2 dx-\frac{B}{2p}P(\chi_{R}u)\geq \delta' P(\chi_{R}u).
\]

Therefore,
\begin{eqnarray*}
\eqref{5.19}
&\geq& 8\delta'\int_{\mathbb{R}^3}P(\chi_{R}u)\, dx
+ \int_{|x|<R}\mathcal{O}\!\left(\frac{|u|^2}{R^2}\right) dx + \frac{1}{R^{b}}.
\end{eqnarray*}

Hence,
\begin{eqnarray}
\frac{d}{dt} M_{a}(t)
&\geq& 8\delta'\int_{|x|<R}P(u)\, dx
+ \mathcal{O}\!\left(\frac{1}{R^{2}}+\frac{1}{R^{b}}+\mathcal{O}_R\right).
\end{eqnarray}

Finally,
\begin{eqnarray}
8\delta'\int_{0}^{T}\int_{|x|<R}P(u)\, dx\, dt
&\lesssim& M_{a}(t) + \mathcal{O}\!\left(\frac{1}{R^{2}}+\frac{1}{R^{b}}+\mathcal{O}_R\right)T \\
&\lesssim& R + \mathcal{O}\!\left(\frac{1}{R^{2}}+\frac{1}{R^{b}}+\mathcal{O}_R\right)T.
\end{eqnarray}

By the definition of $\chi_{R}$, we conclude (\ref{pro5.3a}).
\begin{eqnarray*}
\frac{1}{T}\int_{0}^{T}\int_{|x|<R}P(u)dx&\lesssim&\frac{R}{T}+\mathcal{O}\left(\frac{1}{R^{2}}+\frac{1}{R^{b}}+\mathcal{O}_R\right).
\end{eqnarray*}
Let $T_{n}\to+\infty$ and apply (\ref{pro5.3a}) with $R_{n}=T_{n}^{1/3}$. We obtain
\begin{eqnarray*}
\frac{1}{T_{n}}\int_{0}^{T_{n}}\int_{|x|<R_{n}}P(u)\,dx\,dt 
&\lesssim& T_{n}^{-2/3}\;\longrightarrow\;0.
\end{eqnarray*}
By the fundamental theorem of calculus, it follows that there exist sequences $t_n\to+\infty$ and $R_n\to\infty$ such that
\begin{equation*}
\int_{|x|<R_{n}}P(u(t_n))\,dx \;\longrightarrow\; 0.
\end{equation*}
\end{proof}

\ Finally, we conclude this section by proving the main theorem.
\begin{proof}[\bf Proof of Theorem (\ref{mainresulttheorem})] Take a sequence of times $t_n \to +\infty$, and a sequence $R_n \to 0$ as in Proposition \ref{prop5.3}. Fix $\varepsilon>0$ and $R \ge 0$ as in Proposition \ref{Scattering}. Choose $n$ sufficiently large so that $R_n \ge R$. Then, by Hölder’s inequality, we have
\begin{equation}
\int_{|x|<R} |u(t_n, x)|^2 \, dx \lesssim R^{\frac{2b + 3p - 6}{p}} 
\left( \int_{|x|<R} |x|^{-b} |u(t_n, x)|^p \, dx \right)^{2/p}.
\end{equation}

Next, recall that
\begin{align*}
\left( \int_{|x|<R} |x|^{-b} |u(t_n, x)|^p \, dx \right)^2 
&\lesssim (2R)^{3-\gamma} \int_{|x|<R} \int_{|y|<R} 
\frac{|x|^{-b} |u(t_n, x)|^p \, |y|^{-b} |u(t_n, y)|^p}{|x-y|^{3-\gamma}} \, dx \, dy \\
&\lesssim (2R)^{3-\gamma} \int_{|x|<R} |x|^{-b} |u(t_n, x)|^p 
\left( \int_{|y|<R} \frac{|y|^{-b} |u(t_n, y)|^p}{|x-y|^{3-\gamma}} \, dy \right) dx \\
&\lesssim (2R)^{3-\gamma} \int_{|x|<R} P(u(t_n, x)) \, dx \to 0 \quad \text{as } n \to \infty.
\end{align*}

This implies that
\begin{equation}
\int_{|x|<R} |u(t_n, x)|^2 \, dx \to 0 \quad \text{as } n \to \infty.
\end{equation}

Therefore, Proposition \ref{Scattering} implies that $u$ scatters in $H^1(\mathbb{R}^3)$ forward in time.

\end{proof}

\section*{Declarations}

\noindent\textbf{Ethical Approval:} Not applicable. \\
\noindent\textbf{Funding:} C.M.G. was partially supported by Conselho Nacional de Desenvolvimento Científico e Tecnológico (CNPq) and Fundação de Amparo à Pesquisa do Estado do Rio de Janeiro (FAPERJ), Brazil. S.S. was financed in part by the Coordenação de Aperfeiçoamento de Pessoal de Nível Superior – Brazil (CAPES) – Finance Code 001. G.P. was partially supported by Conselho Nacional de Desenvolvimento Científico e Tecnológico (CNPq), Brazil. \\
\noindent\textbf{Authors' Contributions:} The authors contributed equally to the preparation of this paper. \\
\noindent\textbf{Availability of Data and Materials:} No new data or materials were used in preparing this paper.

\bibliographystyle{abbrv}
\bibliography{bib}	

\end{document}